\documentclass[reqno, 10pt]{amsart}
\usepackage{bbm}
\usepackage{amsfonts}
\usepackage{mathrsfs}
\usepackage{amscd}
\usepackage{cases}
\usepackage{mathrsfs}
 \usepackage[dvipsnames]{xcolor}
\usepackage[top=4cm, bottom=3cm, left=3.2cm, right=3.2cm]{geometry}
\newtheorem{thm}{Theorem}

\newtheorem{lem}{Lemma}

\theoremstyle{definition}

\theoremstyle{remark}
\newtheorem{rem}{Remark}

\numberwithin{equation}{section} \numberwithin{lem}{section}
\numberwithin{thm}{section} \numberwithin{prop}{section}
\numberwithin{cor}{section} \numberwithin{rem}{section}\numberwithin{hyp}{section}

\title[The Euler-alignment system]{
	On the global classical solution to compressible Euler system
        with singular  velocity alignment}

\author{Li Chen}
\address{Li Chen, Department of Mathematics, University of Mannheim,
  68131 Mannheim, Germany. }
\email{chen@math.uni-mannheim.de}

\author{Changhui Tan}
\address{Changhui Tan, Department of Mathematics, University of South
  Carolina, Columbia, SC 29208, USA.}
\email{tan@math.sc.edu}

\author{Lining Tong$^*$}\thanks{*Corresponding author}
\address{Lining Tong, Department of Mathematics, Shanghai University, 200244, China.}
\email{tongln@shu.edu.cn}

\date{\today}

\thanks{\emph{Acknowledgment.}
  The research of CT is supported by the NSF grant DMS 1853001.
  The research of LT is supported by the NSFC grants No. 11771274, 11901379.}

\begin{document}
\maketitle

\centerline{\it Dedicated to Professor Ling Hsiao's 80th birthday}

\begin{abstract}
 We consider a compressible Euler system with singular velocity
 alignment, known as the \emph{Euler-alignment system}, describing the
 flocking behaviors of large animal groups.
 We establish a local well-posedness theory for the system, as well as
 a global well-posedness theory for small initial data.
 We also show the asymptotic flocking behavior, 
 where solutions converge to a constant steady state exponentially in time.
\end{abstract}

\noindent{\tiny AMS SUBJECT CLASSIFICATIONS:} 35Q35, 35Q70, 35L65.

\noindent{\tiny KEYWORDS:} Euler-alignment system, singular velocity alignment,
global existence.
\medskip

\section{Introduction}

In this paper, we study the following Cauchy problem
\begin{eqnarray}
&&\label{eqn1o}\partial_t \rho+\nabla\cdot(\rho u)=0,\\
&&\label{eqn2o}\partial_t (\rho u)+\nabla\cdot(\rho u\otimes u)+\nabla p(\rho)=-\beta\rho u-\rho\int_\Omega\phi(x-y)(u(x)-u(y))\rho(y)\mathrm{d}y,
\end{eqnarray}
in $(0,\infty)\times\Omega$ with initial conditions
\begin{eqnarray}\label{eqn3}
\rho|_{t=0}=\rho_0(x),\quad u|_{t=0}=u_0(x),\quad x\in \Omega.
\end{eqnarray}
The spatial domain $\Omega$ can be either the whole space
$\mathbb{R}^N$ or the torus $\mathbb{T}^N$, where $N$ denotes the dimension.
$\rho$ and $u$ are the unknown density and velocity, respectively.
The pressure is given by the power law $p(\rho)=\rho^{\gamma}$ with
$\gamma\geq 1$, and the damping coefficient $\beta \geq 0$.
The last term in \eqref{eqn2o} represents the nonlocal velocity alignment,
where $\phi$ is called the \emph{communication weight}, measuring the
strength of the alignment interactions.

System \eqref{eqn1o}-\eqref{eqn2o} can be formally derived from a mean
field $M$-particle Newtonian interaction system of the type
\begin{equation}\label{micro}\begin{cases}
   ~\dot{X}_i(t)=V_i(t), \quad 1\leq i\leq M,\\
   ~\dot{V}_i(t)=\dfrac{1}{M}\displaystyle\sum_{j\neq i}F\Big(t, X_i(t)-X_j(t),V_i(t)-V_j(t)\Big)-\beta V_i(t),
 \end{cases}
\end{equation}
with the interacting force $F(t,x,v)=-\phi(x)v$.
It is known as the Cucker-Smale model \cite{CS} which describes
the flocking phenomenon for animal groups.
Other celebrated models that lie in the framework of \eqref{micro}
range from
the classical mechanics with Coulomb force in 3D \cite{HJ1}
$F(t,x,v)=x/|x|^3$,
to modeling the social behavior of agents, for example, wealth distribution in \cite{DLR2} and pedestrian flow in \cite{EGKT}.

Taking the mean field limit of the particle system \eqref{micro}, one obtains, in the mesoscopic level, the Vlasov type kinetic equation
\[
  \partial_t f+v\cdot \nabla_x f+\nabla_v\Big(\iint_{\Omega\times\Omega} F(t,x-y,v-v' )f(t,y,v' ) \mathrm{d}y \mathrm{d}v' f(t,x,v)\Big)-\beta\nabla_v \cdot(vf)=0.
\]
The rigorous derivation of the mean field model with different
backgrounds has been studied extensively in the last decades,
for example in \cite{BP, CCR, CCP, CFTV, CGY, HL, HJ1, LP}, to name a few.

The compressible Euler systems like \eqref{eqn1}-\eqref{eqn2} serve as
hydrodynamic limits of the kinetic equations.
Different choices of Ansatz lead to different pressure laws.
For instance,
mono-kinetic Ansatz  $f(t,x,v)=\rho(t,x)\delta_{u(t,x)}(v)$ implies
the pressureless system $p(\rho)\equiv0$;
local Maxwellian $f(t,x,v)=\rho(t,x)\frac{1}{(2\pi)^{\frac{N}{2}}}
e^{-|v-u(t,x)|^2/2}$ leads to linear pressure $p(\rho)=\rho$; and
the Ansatz $f(t,x,v)=\chi_{|v-u(t,x)|^2\leq \rho^{2/N} (t,x)}
(x,v)$, which comes from the minimization of kinetic energy $\iint
\frac{v^2}{2}f(t,x,v)\mathrm{d}x\mathrm{d}v$ under the restriction
$\|f(t,x,v)\|_{L^\infty}\leq 1$,  would yield the nonlinear pressure
$p(\rho)=\rho^{\frac{N+2}{N}}$. Rigorous justifications of
hydrodynamic limits on Cucker-Smale model can be found in
\cite{FK, KMT}.

It is well-known that for the compressible Euler system
\eqref{eqn1}-\eqref{eqn2} with $\beta=\phi=0$,
only local existence of smooth solutions could be expected,
and shock waves will form in finite time. Velocity damping helps in
preventing the formation of shocks (see \cite{STW, TW, WY}).

When the communication weight $\phi$ is bounded and has a positive
lower bound (or decays sufficiently slow at infinity in the whole
space case), the nonlocal velocity alignment has a damping effect,
which could restrain shock formation, for a class of subcritical
initial data.
See \cite{CCTT,TT14,T20} for threshold conditions for the pressureless
systems, and \cite{C} for isothermal pressure $p(\rho)=\rho$ with
small initial data.
For the case where the communication weight is not positive, such as in the pedestrian and material flow case, an additional damping effect is required to prove the global smooth solution for small initial data in \cite{CCGW,TC,TCGW}.

We are interested in the case when the communication weight $\phi$ is
singular at the origin. A prototype choice of $\phi$ would be
\begin{equation}\label{phi}
  \phi(x)=\frac{c_\alpha}{|x|^{N+2\alpha}},\quad
   c_\alpha=\frac{2^{2\alpha}\Gamma(\alpha+\frac{N}{2})}{\pi^{\frac{N}{2}}|\Gamma(-\alpha)|}.
\end{equation}
for $\alpha\in(0,1)$, where the constant $c_\alpha$ is related to the fractional Laplacian
operator, which in $\mathbb{R}^N$ reads
\begin{equation}\label{LD}
  \mathcal{L}=(-\Delta)^{\frac{1}{2}},\quad
  \mathcal{L}^{2\alpha}f=c_{\alpha}P.V.
  \int_{{\mathbb R}^N}\frac{f(x)-f(y)}{|x-y|^{N+2\alpha}}\mathrm{d}y,
  \quad 0<\alpha<1.
\end{equation}
Note that \eqref{LD} holds in $\Omega=\mathbb{T}^N$
by viewing $f$ as a periodic function in $\mathbb{R}^N$.

By considering a purturbation of the constant solution $\rho\equiv1$,
$u\equiv0$, the system \eqref{eqn1o}-\eqref{eqn2o} with $\phi$ defined in \eqref{phi}
can be reformulated into
\begin{eqnarray}
 &&\label{eqn1}\partial_t \rho+\nabla\cdot(\rho u)=0,\\
 &&\label{eqn2}\partial_t (\rho u)+\nabla\cdot(\rho u\otimes u)+\nabla p( \rho)=-\beta\rho u-\rho \mathcal{L}^{2\alpha}u-\rho \mathcal{L}^{2\alpha}((\rho-1) u)+\rho u\mathcal{L}^{2\alpha}\rho.
\end{eqnarray}
On can observe a linear fractional viscosity term
$-{\mathcal L}^{2\alpha}u$ in \eqref{eqn2}, which has
a regularization effect.
Such effect has been captured beautifully in 1D, where global
regularity can be shown for all smooth initial data away from vacuum
(see \cite{T19} for the effect of the vacuum),
for the pressureless system with $\alpha\in(0,1)$ \cite{DKRT,KT,ST1, ST3},
and for the isentropic system ($p(\rho)=\rho^\gamma $) with
$\alpha\in(\frac56,1)$ \cite{CDS}.
The multi-dimensional system, however, is much less understood,
due to the lack of an auxiliary quantity, first introduced in
\cite{CCTT}, that nicely captures the commutator structure (or
cancelation property) in
$-\mathcal{L}^{2\alpha}((\rho-1) u)+u\mathcal{L}^{2\alpha}\rho$,
so that it is dominated by the linear dissipation.
To our best knowledge, the only global result in multi-dimension is for the
pressureless system with small initial data \cite{Shv}.

The main goal of this paper is to establish a global theory for the
system \eqref{eqn1o}-\eqref{eqn2o} in multi-dimension with pressure.
First, we establish a local well-posedness theory, together with a
regularity criterion. The singular kernel leads to a regularization
effect for the velocity $u$.
Next, we prove global regularity for small initial data.
The main subtlety is to make use of the pressure to generate
dissipation for the density (the idea was introduced in \cite{STW}),
and to control the last two terms in \eqref{eqn2} together with
the velocity dissipation.
Finally, we show an exponential convergence of the solution towards
the constant steady state $\rho\equiv1$ and $u\equiv0$, when
$\Omega=\mathbb{T}^N$.
Unlike the pressureless dynamics, where the asymptotic density profile
is not necessarily uniformly distributed (e.g. \cite{Shv}), the presence of pressure enforces the
steady state to be a constant $\rho\equiv1$.
The exponential decay is obtained by a careful examination on the physical energy.

The arrangement of this paper is the following.
In section 2, we state the main results, together with several
preliminary lemmas.
In section 3, we establish the local well-posedness theory and
regularity criterion, using energy method.
In section 4, we show global well-posedness for small initial data. Finally, in section 5, we prove the exponential decay of the solution when the domain is a torus.

Here in the following we introduce several notations used throughout
the paper.
We will repeatedly use $C$ as a generic positive constant. Unless
specified, $C$ can depend on parameters $\gamma, \beta, s, N$, etc,
but is independent of $t$ and data $(\rho,u)$.
Denote $C^\lambda:=C^\lambda(\Omega)$ be the H\"older space, for
$\lambda\in(0,1)$, and $C^{\lambda}=C^{1,\lambda-1}$ for
$\lambda\in(1,2)$. For $\lambda=1$, we use $C^1$ to represent
$C^{1,\epsilon}$ for simple notations. See Remark \ref{rem:C1}
for more details.
For simplicity, we write $\int f\mathrm{d}x:=\int_{\Omega} f\mathrm{d}x$.

\section{Preliminaries and main results}
\subsection{Reformulation of the problem}\label{2.1}
We start by reformulating the Cauchy problem of the compressible Euler system
(\ref{eqn1o})-(\ref{eqn3}) with respect to the constant solution
$\rho\equiv1$ and $u\equiv0$, following the idea in \cite{STW}.

Introduce a new variable $\sigma$,  defined as follows
\begin{eqnarray}\label{sigma}
\sigma=\sigma(\rho):=\left\{\begin{array}{ll}\ln \rho &\gamma=1,\\ \frac{2\sqrt{\gamma}}{\gamma-1}(\rho^{\frac{\gamma-1}{2}}-1)&\gamma>1.
\end{array}\right.
\end{eqnarray}
Inversely, $\rho$ can be expressed by
\begin{eqnarray}\label{rho}
  \rho=\rho(\sigma):=\left\{\begin{array}{ll} e^\sigma &\gamma=1\\ \Big(\frac{\gamma-1}{2\sqrt{\gamma}}\sigma+1\Big)^{\frac{2}{\gamma-1}}&\gamma>1
\end{array}\right.
\end{eqnarray}
The equation \eqref{eqn1}-\eqref{eqn2} are transformed into the
following equivalent system:
\begin{align}
  &\label{eqn1a}\partial_t \sigma+u\cdot\nabla\sigma +
    \left(\frac{\gamma-1}{2}\sigma+\sqrt{\gamma}\right)\nabla\cdot u=0,\\
  &\label{eqn2a}\partial_t u+u\cdot\nabla u+
     \left(\frac{\gamma-1}{2}\sigma+\sqrt{\gamma}\right)\nabla \sigma=-\beta u-\mathcal{L}^{2\alpha}u-\mathcal{L}^{2\alpha}((\rho(\sigma)-1)u)+u\mathcal{L}^{2\alpha}(\rho(\sigma)-1).
\end{align}
subject to the initial data
\begin{equation}\label{eqn3a}
\sigma|_{t=0}=\sigma_0(x)=\sigma(\rho_0(x))\quad u|_{t=0}=u_0(x),\quad x\in \Omega,
\end{equation}

One can check that the $C^1$ solution of \eqref{eqn1a}-\eqref{eqn3a}
is equivalent to the solution of \eqref{eqn1o}-\eqref{eqn3}, if the
density is positive, namely
\[\rho_{\min}(t):=\inf_{x\in\Omega}\rho(t,x)=\inf_{x\in\Omega}\rho(\sigma(t,x))>0.\]

\subsection{Main results}
We study local and global well-posedness of the system  \eqref{eqn1a}-\eqref{eqn3a}.

The first result concerns the local well-posedness of the system.

\begin{thm}[Local well-posedness] \label{local}
  Let $s>\frac{N}{2}+\max\{1,2\alpha\}$,
  assume that $(\sigma_0(x),u_0(x))\in (H^{s}(\Omega))^{N+1}$ and $\inf_{x\in\Omega}\rho(\sigma_0(x))>0$.
  Then, there exist a unique classical solution $(\sigma,u)$ of the
  Cauchy problem \eqref{eqn1a}-\eqref{eqn3a} satisfying
  \begin{equation}\label{localreg}
   \sigma\in C([0,T],H^s(\Omega)), \quad u\in C([0,T],
   (H^s(\Omega))^N)\cap L^2([0,T], (H^{s+\alpha}(\Omega))^N)
  \end{equation}
  for some finite $T>0$. Moreover, \eqref{localreg} holds for any time
  $T$ if and only if
  \begin{equation}\label{BKM}
    \int_0^T\Big(\|\nabla
    u(t,\cdot)\|_{L^\infty}+\|\sigma(t,\cdot)\|_{C^{\max\{1,2\alpha\}}}\Big)\mathrm{d}t<+\infty.
  \end{equation}
\end{thm}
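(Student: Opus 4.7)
The plan is to obtain the solution by a standard iteration combined with a careful $H^s$ energy estimate that exploits both the symmetric hyperbolic structure of the principal part of \eqref{eqn1a}--\eqref{eqn2a} and the fractional dissipation $-\mathcal{L}^{2\alpha}u$. First I would regularize the initial data and construct approximates $(\sigma^{(n+1)},u^{(n+1)})$ by freezing $(\sigma^{(n)},u^{(n)})$ in every nonlinear position: in the transport velocity, in the coefficient $\tfrac{\gamma-1}{2}\sigma+\sqrt{\gamma}$, and in the factor $\rho(\sigma)-1$ sitting inside the nonlocal terms. Each linearized problem is a symmetric hyperbolic system perturbed by a linear fractional Laplacian and is solvable in the regularity class \eqref{localreg} by a Galerkin/energy argument. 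The real work is a uniform-in-$n$ bound for
\[
\mathcal{E}^{(n)}(t):=\|\sigma^{(n)}(t)\|_{H^s}^2+\|u^{(n)}(t)\|_{H^s}^2+\int_0^t\|u^{(n)}(\tau)\|_{H^{s+\alpha}}^2\,d\tau
\]
on a short interval $[0,T^*]$ depending only on the initial energy and on $\rho_{\min}(0)$.

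For the a priori estimate I apply $\Lambda^s:=(-\Delta)^{s/2}$ to both equations and pair with $\Lambda^s\sigma$ and $\Lambda^s u$ respectively. The symmetric cross-coupling between the pressure gradient and the term $(\tfrac{\gamma-1}{2}\sigma+\sqrt{\gamma})\nabla\cdot u$ cancels after integration by parts up to commutators $[\Lambda^s,\sigma]\nabla(\cdot)$ which Kato--Ponce bounds bring under control. The transport commutators $[\Lambda^s,u]\nabla(\cdot)$ are bounded by $\|\nabla u\|_{L^\infty}\mathcal{E}$. The linear fractional Laplacian contributes the good term $-c\|u\|_{H^{s+\alpha}}^2$, and the composition $\rho(\sigma)-1$ is handled by Moser-type estimates as long as $\sigma\in H^s\hookrightarrow L^\infty$ stays in the smooth region of $\rho(\cdot)$, which follows from preservation of $\rho_{\min}>0$ on short times.

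The hardest step is controlling the nonlocal pair $-\mathcal{L}^{2\alpha}((\rho-1)u)+u\mathcal{L}^{2\alpha}(\rho-1)$. Setting $f:=\rho-1$, the algebraic identity
\[
-\mathcal{L}^{2\alpha}(fu)+u\mathcal{L}^{2\alpha}f=-f\mathcal{L}^{2\alpha}u-Q(f,u),\qquad Q(f,u):=\mathcal{L}^{2\alpha}(fu)-f\mathcal{L}^{2\alpha}u-u\mathcal{L}^{2\alpha}f,
\]
isolates a weighted dissipation $-(1+f)\mathcal{L}^{2\alpha}u=-\rho\mathcal{L}^{2\alpha}u$, which retains its sign as long as $\rho_{\min}>0$, together with a symmetric bilinear commutator $Q$ whose pointwise kernel is proportional to $\frac{(f(x)-f(y))(u(x)-u(y))}{|x-y|^{N+2\alpha}}$. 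Pairing $\Lambda^s Q(f,u)$ with $\Lambda^s u$, distributing half a fractional derivative, and using a fractional Leibniz/commutator estimate, one obtains a bound $C\|\rho-1\|_{C^{\max\{1,2\alpha\}}}\|u\|_{H^{s+\alpha}}\|u\|_{H^s}$ plus lower-order terms, a small part of which is absorbed into $c\|u\|_{H^{s+\alpha}}^2$ by Young's inequality. Summing all pieces one arrives at $\tfrac{d}{dt}\mathcal{E}+c\|u\|_{H^{s+\alpha}}^2\le C(1+\mathcal{E})^p$ for some $p>1$, and Gronwall yields $T^*>0$ uniformly in $n$. Compactness, together with a standard contraction estimate in $L^2$ for the difference of two iterates, delivers convergence and uniqueness.

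For the continuation criterion I revisit the same estimate while carefully extracting \emph{which} low-regularity factor multiplies $\mathcal{E}$. Every bound above can be sharpened into the form $C\bigl(\|\nabla u\|_{L^\infty}+\|\sigma\|_{C^{\max\{1,2\alpha\}}}\bigr)\mathcal{E}$ plus terms absorbable by the dissipation: Kato--Ponce supplies the factor $\|\nabla u\|_{L^\infty}$ for the transport pieces, the pressure coupling produces $\|\sigma\|_{C^1}$, and the commutator $Q(\rho-1,u)$ produces $\|\sigma\|_{C^{2\alpha}}$; the maximum of the two exponents appears as $\|\sigma\|_{C^{\max\{1,2\alpha\}}}$. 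Gronwall then shows that $\mathcal{E}$ cannot blow up at time $T$ provided the integral in \eqref{BKM} is finite, so the solution extends beyond $T$; conversely, the integrand in \eqref{BKM} is controlled by $\|\sigma\|_{H^s}+\|u\|_{H^s}$ through the Sobolev embeddings $H^s\hookrightarrow W^{1,\infty}$ and $H^s\hookrightarrow C^{\max\{1,2\alpha\}}$, giving the ``only if'' direction. The single hardest point throughout is producing a H\"older (rather than a full Sobolev) factor on $\sigma$ in the estimate of $Q(\rho-1,u)$, which is what makes the criterion \eqref{BKM} usable.
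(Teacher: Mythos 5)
Your proposal is correct and follows essentially the same route as the paper: an iteration scheme backed by $H^s$ energy estimates in which the pair $-\mathcal{L}^{2\alpha}((\rho-1)u)+u\mathcal{L}^{2\alpha}(\rho-1)$ is resolved through the symmetric double-commutator cancelation, so that the uncontrollable top-order term on $\rho-1$ drops out and the remaining $-\rho\,\mathcal{L}^{2\alpha}u$ supplies the sign-definite, $\rho_{\min}$-weighted dissipation, followed by a Gronwall/BKM argument for the continuation criterion. The one point you gloss over is that the constants in the energy inequality depend on $\rho_{\min}^{-1}$ and $\|\sigma\|_{L^\infty}$, so to reduce the a priori criterion to \eqref{BKM} one must also observe that these quantities are controlled by $\exp\bigl(\int_0^T\|\nabla\cdot u\|_{L^\infty}\,\mathrm{d}t\bigr)$ via the continuity equation, which is exactly what the finiteness of \eqref{BKM} provides.
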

\begin{rem}
  Theorem \ref{local} holds for any smooth initial data, as long as
  density stays away from vacuum, in which case the linear dissipation
  $-\mathcal{L}^{2\alpha}u $ plays a dominate role in the alignment
  force. Note that the last two terms in \eqref{eqn2a} can be viewed
  as a commutator
  \begin{equation}\label{com}
    \mathcal{L}^{2\alpha}((\rho(\sigma)-1)u)-u\mathcal{L}^{2\alpha}(\rho(\sigma)-1)
    =[\mathcal{L}^{2\alpha}, u](\rho(\sigma)-1).
  \end{equation}
  One needs to make good use of this commutator structure in order to
  obtain the desired dissipative estimates. With the singular
  alignment force, the solution $u$ gains regularity instantly. This
  is a major difference compared with the regular (bounded) alignment force.
\end{rem}

Next, we turn to the global well-posedness theory.
One standard approach is to show that the Beale-Kato-Majda type regularity
criterion \eqref{BKM} holds in all finite time.
However, it is generally difficult to validate such criterion (except
in 1D with the aid of an additional structure \cite{CDS}).

We focus on global regularity for small initial data. We will show
that the smallness propagates in time, and hence condition \eqref{BKM}
holds in all finite time.

\begin{thm}[Global well-posedness for small data]\label{global}
  Let $s>\max\left\{\frac{N}{2}+\max\{1,2\alpha\}, \,2-\alpha\right\}$.
  Assume either $\Omega=\mathbb{R}^N$ and $\beta>0$, or
  $\Omega=\mathbb{T}^N$ and $\beta\geq0$.
  There exists a small parameter $\delta_0>0$, such that if
  \begin{equation}\label{smallinit}
    \|\sigma_0\|_{H^s}^2+\|u_0\|_{H^s}^2\leq \delta_0^2,
  \end{equation}
  then the Cauchy problem \eqref{eqn1a}-\eqref{eqn3a} has a unique global
  classical solution $(\sigma, u)$.
\end{thm}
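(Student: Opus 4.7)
The plan is a continuation argument based on Theorem \ref{local}: by local well-posedness, a solution exists on a maximal interval $[0,T_*)$ and extends past any finite time $T$ whenever the Beale--Kato--Majda criterion \eqref{BKM} holds on $[0,T]$. Since $s>\tfrac{N}{2}+\max\{1,2\alpha\}$, Sobolev embedding gives
\[
\|\nabla u\|_{L^\infty}+\|\sigma\|_{C^{\max\{1,2\alpha\}}}\leq C\bigl(\|u\|_{H^s}+\|\sigma\|_{H^s}\bigr),
\]
so it suffices to show that a suitable energy $\mathcal{E}(t)\sim \|\sigma(t)\|_{H^s}^2+\|u(t)\|_{H^s}^2$ stays uniformly small once $\delta_0$ is chosen small. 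A standard bootstrap will then produce the global solution.

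The core is an \emph{a priori} estimate on $\mathcal{E}_0(t):=\|\sigma(t)\|_{H^s}^2+\|u(t)\|_{H^s}^2$. For each multi-index $k$ with $|k|\leq s$, applying $\partial^k$ to \eqref{eqn1a}--\eqref{eqn2a}, testing with $\partial^k\sigma$ and $\partial^k u$ respectively and summing yields, after symmetrization of the highest-order pressure coupling $(\tfrac{\gamma-1}{2}\sigma+\sqrt{\gamma})\nabla\sigma$ against $(\tfrac{\gamma-1}{2}\sigma+\sqrt{\gamma})\nabla\!\cdot\! u$,
\[
\tfrac12\tfrac{d}{dt}\mathcal{E}_0+\beta\|u\|_{H^s}^2+c_0\|u\|_{H^{s+\alpha}}^2\leq C\mathcal{E}_0^{1/2}\bigl(\mathcal{E}_0+\|u\|_{H^{s+\alpha}}^2\bigr).
\]
The right-hand side collects the transport commutators $[\partial^k,u\cdot\nabla]\sigma$ and $[\partial^k,u\cdot\nabla]u$, the variable-coefficient commutator $[\partial^k,\tfrac{\gamma-1}{2}\sigma]\nabla\sigma$, and the singular pair from \eqref{com}, all estimated by Sobolev-product and Kato--Ponce type inequalities. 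Crucially, this identity gives no dissipation on $\sigma$.

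To remedy this I import the Sideris--Thomases--Wang cross term
\[
G(t)=\sum_{|k|\leq s-1}\int \partial^k u\cdot \nabla\partial^k\sigma\,\mathrm{d}x,
\]
which, after using \eqref{eqn1a}--\eqref{eqn2a}, satisfies
\[
\tfrac{d}{dt}G=-\sqrt{\gamma}\,\|\nabla\sigma\|_{H^{s-1}}^2+\|\nabla\!\cdot\! u\|_{H^{s-1}}^2+R_G,
\]
where $R_G$ absorbs nonlinear and fractional contributions; in particular the $\mathcal{L}^{2\alpha}u$ term contributes $\|u\|_{H^{s+\alpha}}\|\sigma\|_{H^{s-1+\alpha}}$, controllable thanks to the hypothesis $s>2-\alpha$. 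Setting $\mathcal{E}_\eta:=\mathcal{E}_0+\eta G$ with $\eta$ small makes $\mathcal{E}_\eta$ equivalent to $\mathcal{E}_0$, and combining the two identities yields
\[
\tfrac{d}{dt}\mathcal{E}_\eta+\kappa\bigl(\beta\|u\|_{H^s}^2+\|u\|_{H^{s+\alpha}}^2+\|\nabla\sigma\|_{H^{s-1}}^2\bigr)\leq C\mathcal{E}_\eta^{1/2}\bigl(\|u\|_{H^{s+\alpha}}^2+\|\nabla\sigma\|_{H^{s-1}}^2+\|\sigma\|_{L^2}^2\bigr).
\]
The zero mode of $\sigma$, not controlled by $\|\nabla\sigma\|_{H^{s-1}}$, is treated separately: on $\mathbb{T}^N$ conservation of mass pins $\int\sigma$ to a small constant determined by $\sigma_0$; on $\mathbb{R}^N$ the damping $\beta>0$ together with the continuity equation closes a Grönwall estimate on $\|\sigma\|_{L^2}$.

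The main obstacle is the singular commutator $[\mathcal{L}^{2\alpha},u](\rho(\sigma)-1)$ highlighted around \eqref{com}, for which the 1D auxiliary quantity of \cite{CCTT} is unavailable. Since $\rho(\sigma)-1$ is an analytic function of $\sigma$ vanishing at $0$, smallness of $\sigma$ in $H^s\hookrightarrow L^\infty$ yields $\|\rho(\sigma)-1\|_{H^s}\leq C\|\sigma\|_{H^s}$, and Moser-type bounds propagate this to all intermediate norms. Then $L^2$-estimates for $\partial^k\mathcal{L}^{2\alpha}\bigl((\rho(\sigma)-1)u\bigr)$ and $\partial^k\bigl(u\,\mathcal{L}^{2\alpha}(\rho(\sigma)-1)\bigr)$ via the fractional Kato--Ponce inequality produce factors of the form $\|\sigma\|_{H^s}\|u\|_{H^{s+\alpha}}+\|\sigma\|_{H^{s+2\alpha-1}}\|u\|_{H^s}$, both absorbable into the available dissipation because $s>2-\alpha$ and $\mathcal{E}_\eta^{1/2}$ is small. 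Once the closed differential inequality is in place, a continuity argument shows $\mathcal{E}_\eta(t)\leq 2\delta_0^2$ for all $t\geq 0$, the BKM criterion \eqref{BKM} is met on every finite interval, and Theorem \ref{local} delivers the claimed global classical solution.
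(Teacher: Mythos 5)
Your overall architecture matches the paper's: a bootstrap/continuation argument reducing global existence to propagation of smallness, closed by adding a Sideris--Thomases--Wang cross term $\int u\cdot\nabla\sigma$ (and its $H^{s-1}$ analogue) to manufacture dissipation for the density. However, there is a genuine gap in how your differential inequality is closed. Your right-hand side contains $C\mathcal{E}_\eta^{1/2}\|\sigma\|_{L^2}^2$, and there is no dissipation on $\|\sigma\|_{L^2}$ anywhere in the system: the damping and the alignment act only on $u$, and the cross term produces only $\|\nabla\sigma\|_{H^{s-1}}^2$. Your proposed ``separate treatment'' does not repair this. On $\mathbb{R}^N$, the claim that $\beta>0$ and the continuity equation close a Gr\"onwall estimate on $\|\sigma\|_{L^2}$ is unfounded: the continuity equation gives at best $\frac{d}{dt}\|\sigma\|_{L^2}^2\lesssim \|\nabla\cdot u\|_{L^\infty}\|\sigma\|_{L^2}^2+\|\sigma\|_{L^2}\|\nabla\cdot u\|_{L^2}$, which requires $\nabla\cdot u\in L^1_t$ to close, while the energy method only yields $L^2_t$ control of $\|u\|_{H^{s+\alpha}}$; and even a uniform bound $\|\sigma\|_{L^2}\leq M$ would leave a non-decaying source $C\delta M^2$ whose time integral diverges, destroying uniform smallness. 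On $\mathbb{T}^N$ the idea can be salvaged, but not by the reason you give: one needs conservation of mass to force $\int(\rho-1)=0$ and then Poincar\'e applied to $\rho-1$ (not to $\sigma$, whose mean is not conserved) to get $\|\sigma\|_{L^2}\lesssim\|\rho-1\|_{L^2}\lesssim\|\nabla\sigma\|_{L^2}$; merely ``pinning $\int\sigma$ to a small constant'' still leaves a constant-in-time source.

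The paper avoids the issue entirely by a structural observation you miss: every nonlinear term in the $L^2$ and $\dot H^s$ estimates is arranged so that only $\|u\|_{H^s}$, $\|\mathcal{L}^s\sigma\|_{L^2}$ and $\|\nabla\sigma\|_{H^{s-1}}$ appear on the right-hand side --- never $\|\sigma\|_{L^2}$ itself. For instance, $\int\sigma u\cdot\nabla\sigma\,\mathrm{d}x$ is bounded by $\|\sigma\|_{L^\infty}\|u\|_{L^2}\|\nabla\sigma\|_{L^2}$, and the hypothesis $s>2-\alpha$ is used precisely so that the commutator remainder $\|\mathcal{L}^{s+\alpha-1}\sigma\|_{L^2}$ sits strictly between $\|\nabla\sigma\|_{L^2}$ and $\|\mathcal{L}^s\sigma\|_{L^2}$, i.e.\ is controlled by $\|\nabla\sigma\|_{H^{s-1}}$. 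With that, the closed inequality reads $\frac{d}{dt}Y+\frac{\nu}{2}\|u\|_{H^{s+\alpha}}^2+C\delta\|\nabla\sigma\|_{H^{s-1}}^2\leq0$, $Y$ is monotone, and smallness propagates on both $\mathbb{R}^N$ and $\mathbb{T}^N$ without ever needing decay of $\|\sigma\|_{L^2}$. To fix your argument, redo the product and commutator estimates so that $\sigma$ enters the right-hand side only through $\|\sigma\|_{L^\infty}$ (a prefactor absorbed into $C\delta$) and through $\|\nabla\sigma\|_{H^{s-1}}$.
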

\begin{rem}
  The damping is needed when $\Omega=\mathbb{R}^N$ in order to provide
  enough control on $\|u\|_{L^2}$. It is not required when
  $\Omega=\mathbb{T}^N$ as $\|u\|_{L^2}$ can be controlled by
  dissipation via Poincar\'e inequality.
  See Remark \ref{rem:longtime} for discussions on asymptotic flocking
  behaviors and convergence rate.
\end{rem}

Our final result is on the asymptotic behavior of the system. We show
the flocking phenomenon with fast alignment in the case
$\Omega=\mathbb{T}^N$. 
\begin{thm}[Large-time behavior]\label{large}
    Assume $\Omega=\mathbb{T}^N$, $\beta\geq0$, and $(\rho, u)$ be the
    classical solution to the system \eqref{eqn1o}-\eqref{eqn3} 
    satisfying $(\rho,u)\in L^{\infty}((0,+\infty)\times\mathbb{T}^N)$.
    We further assume the initial data $(\rho_0,u_0)$ satisfies
    \begin{equation}\label{rhobar}
      \frac{1}{|\mathbb{T}^N|}\int_{\mathbb{T^N}}\rho_0(x)\mathrm{d}x=1,
    \end{equation}
    and for the case $\beta=0$, we assume additionally
    \begin{equation}\label{zeromom}
      \int_{\mathbb{T}^N}\rho_0(x) u_0(x)\mathrm{d}x=0.
    \end{equation}
    Then there exists constants $\mu=\mu(\alpha,\|\rho\|_{L^\infty},
    \|u\|_{L^\infty})$ and $C=C(\alpha, \rho_0, u_0, \|\rho\|_{L^\infty}, \|u\|_{L^\infty})$
    such that
    \begin{equation}\label{expDecay}
      \int\rho(t,x)|u(t,x)|^2\mathrm{d}x+\int(\rho(t,x)
      -1)^2\mathrm{d}x\leq Ce^{-\mu t}.
    \end{equation}
    Moreover, if \eqref{smallinit} is satisfied for the corresponding
    $(\sigma, u)$ system \eqref{eqn1a}-\eqref{eqn3a}, then there
    exists constants $\mu=\mu(\alpha,\delta_0)$ and $C=C(\alpha,
    \delta_0)$ such that
    \begin{equation}\label{expDecayh}
      \|\sigma(t,\cdot)\|_{H^s}^2+\|u(t,\cdot)\|_{H^s}^2\leq Ce^{-\mu t}.
    \end{equation}
\end{thm}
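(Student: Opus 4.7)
Both \eqref{expDecay} and \eqref{expDecayh} follow the same paradigm: construct a Lyapunov functional whose natural dissipation controls only the velocity, and then add a \emph{density-velocity cross term} that transfers this dissipation onto the density. Part one is carried out at the level of the physical relative energy, using only the $L^{\infty}$ hypotheses on $(\rho,u)$; part two is carried out at the level of the $H^{s}$ energy already controlled by Theorem~\ref{global}.

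\emph{Part one.} Define the relative energy
$$
E(t)=\int\Bigl(\tfrac12\rho|u|^{2}+H(\rho)\Bigr)dx,\qquad H(\rho)=\tfrac{\rho^{\gamma}-\gamma\rho+\gamma-1}{\gamma-1}\ \ (\rho\ln\rho-\rho+1\text{ if }\gamma=1),
$$
so that $c(M)(\rho-1)^{2}\leq H(\rho)\leq C(M)(\rho-1)^{2}$ on $\rho\in[0,M]$ with $M=\|\rho\|_{L^{\infty}}$. A standard manipulation of \eqref{eqn1o}--\eqref{eqn2o} yields
$$
\frac{dE}{dt}=-\beta\int\rho|u|^{2}dx-\tfrac12\iint\phi(x-y)|u(x)-u(y)|^{2}\rho(x)\rho(y)\,dx\,dy.
$$
Since $\mathbb{T}^{N}$ is compact, the periodised kernel satisfies $\phi\geq\phi_{\ast}>0$; expanding the square gives a lower bound $\phi_{\ast}|\mathbb{T}^{N}|\int\rho|u|^{2}-\phi_{\ast}|\int\rho u|^{2}$. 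Integrating the momentum equation yields $\frac{d}{dt}\int\rho u=-\beta\int\rho u$, so \eqref{zeromom} forces $\int\rho u\equiv0$ when $\beta=0$, while for $\beta>0$ the quantity $|\int\rho u|^{2}$ decays like $e^{-2\beta t}$ and is an acceptable source. To create density dissipation, introduce
$$
G(t)=\int\rho u\cdot\nabla\Psi\,dx,\qquad \Delta\Psi=\rho-1,\quad \int\Psi\,dx=0.
$$
The key pairing in $\dot G$ is the pressure gradient, producing
$$
-\int\nabla p(\rho)\cdot\nabla\Psi\,dx=\int(p(\rho)-p(1))(\rho-1)\,dx\geq c(M)\int(\rho-1)^{2}\,dx;
$$
the remaining terms in $\dot G$ (convection, damping, nonlocal alignment in commutator form \eqref{com}, and the contribution of $\partial_{t}\Psi$) are bounded by $\int\rho|u|^{2}+\|\rho-1\|_{L^{2}}^{2}$ using $L^{\infty}$ bounds, elliptic regularity for $\nabla\Psi$, and fractional Leibniz inequalities. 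For $\eta>0$ small, $E-\eta G$ is equivalent to $\int\rho|u|^{2}+\int(\rho-1)^{2}$ and satisfies $\tfrac{d}{dt}(E-\eta G)\leq -\mu(E-\eta G)+Ce^{-2\beta t}$, which yields \eqref{expDecay} by Grönwall.

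\emph{Part two.} From the proof of Theorem~\ref{global} we inherit the uniform smallness $\|\sigma(t)\|_{H^{s}}^{2}+\|u(t)\|_{H^{s}}^{2}\lesssim\delta_{0}^{2}$ and the dissipative bound on $\|u\|_{H^{s+\alpha}}^{2}$. The missing dissipation for $\sigma$ is produced by the cross term
$$
K(t)=\sum_{|k|\leq s-1}\int\partial^{k}\sigma\,\nabla\cdot\partial^{k}u\,dx.
$$
Differentiating and invoking \eqref{eqn1a}--\eqref{eqn2a}, the linear pressure coupling generates $-\sqrt{\gamma}\|\nabla\sigma\|_{H^{s-1}}^{2}$ as the leading good sign in $\dot K$, while all nonlinear and nonlocal contributions, including the commutator \eqref{com}, are bounded by $\delta_{0}$ times the dissipation norms via Kato--Ponce estimates. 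The Lyapunov functional $\mathcal{E}(t)=\|\sigma\|_{H^{s}}^{2}+\|u\|_{H^{s}}^{2}-\eta K(t)$ is coercive for small $\eta$ and obeys
$$
\frac{d\mathcal{E}}{dt}\leq-c\bigl(\|u\|_{H^{s+\alpha}}^{2}+\|\nabla\sigma\|_{H^{s-1}}^{2}+\beta\|u\|_{H^{s}}^{2}\bigr).
$$
Mass conservation \eqref{rhobar} together with the Taylor expansion of $\sigma(\rho)$ around $\rho=1$ gives $|\int\sigma\,dx|\lesssim\|\sigma\|_{L^{2}}^{2}$, so Poincaré converts $\|\nabla\sigma\|_{H^{s-1}}$ into $\|\sigma\|_{H^{s}}$ up to higher-order terms absorbed by $\delta_{0}$. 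An analogous step using either $\int\rho u\equiv0$ (if $\beta=0$) or the $\beta\|u\|_{H^{s}}^{2}$ damping term (if $\beta>0$) controls the low mode of $u$. We conclude $\dot{\mathcal{E}}\leq-\mu\mathcal{E}$, hence \eqref{expDecayh}.

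\emph{Main obstacle.} The delicate step in both parts is the treatment of the nonlocal tail, which by \eqref{com} equals $-[\mathcal{L}^{2\alpha},u](\rho-1)$. In part two the commutator structure combined with Kato--Ponce type fractional product rules closes the bound precisely because $\delta_{0}$ is small. In part one the same commutator must be controlled when paired with $\rho\nabla\Psi$ using \emph{only} $L^{\infty}$ bounds on $(\rho,u)$ and the convexity of $p$; it is here that the positive lower bound $\phi_{\ast}$ on the periodised kernel and the strict convexity of the pressure are essential, and this dependence is what pins the constants $\mu,C$ in \eqref{expDecay} to $\alpha$, $\|\rho\|_{L^{\infty}}$ and $\|u\|_{L^{\infty}}$ only.
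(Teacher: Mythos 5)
Your overall architecture coincides with the paper's: for \eqref{expDecay}, the relative energy $\int(\frac12\rho|u|^2+h(\rho))$, the periodised-kernel lower bound on $\mathbb{T}^N$ (using \eqref{rhobar} and, for $\beta=0$, \eqref{zeromom}), and a stream-function cross term $\varepsilon\int\rho u\cdot\nabla\psi$ whose pressure pairing produces $\int(p(\rho)-p(1))(\rho-1)\geq\int(\rho-1)^2$; for \eqref{expDecayh}, the $H^s$ energy from Theorem \ref{global} plus the cross terms of Lemma \ref{lem:cross}. Your Part two does diverge from the paper in one respect worth noting: you recover the missing $\|\sigma\|_{L^2}$-dissipation by combining mass conservation with a Taylor expansion of $\sigma(\rho)$ (so $|\int\sigma|\lesssim\|\sigma\|_{L^2}^2$) and Poincar\'e, closing a self-contained differential inequality $\dot{\mathcal{E}}\leq-\mu\mathcal{E}$; the paper instead bootstraps, first proving \eqref{expDecay}, deducing $\|\sigma\|_{L^2}^2\leq Ce^{-\mu t}$ from \eqref{sigma}, and feeding this as an exponentially decaying source into the Gr\"onwall inequality for $Y(t)$. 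Your route is legitimate and arguably cleaner, at the price of requiring $\delta_0$ small to absorb the mean of $\sigma$.

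There is, however, a concrete gap in Part one: the control of the alignment contribution to $\dot G$, i.e. the term $\varepsilon\iint\nabla\Psi(x)\cdot\phi(x-y)(u(x)-u(y))\rho(x)\rho(y)\,\mathrm{d}x\,\mathrm{d}y$ (the paper's $L_6$). You propose to bound it ``in commutator form \eqref{com} \dots using $L^\infty$ bounds, elliptic regularity for $\nabla\Psi$, and fractional Leibniz inequalities,'' but this cannot close: the commutator estimates \eqref{comm21}--\eqref{comm22} require $\mathcal{L}^{2\alpha}u$ or $\nabla u$ in $L^\infty$, and in Part one you only have $(\rho,u)\in L^\infty$; moreover $\phi(x)\sim|x|^{-N-2\alpha}$ is not locally integrable, so the double integral does not even converge absolutely without exploiting a difference structure in \emph{both} factors. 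The device that works (and that the paper uses) is to symmetrise in $(x,y)$, rewriting the term as $\frac{\varepsilon}{2}\iint(\nabla\Psi(x)-\nabla\Psi(y))\cdot\phi(x-y)(u(x)-u(y))\rho(x)\rho(y)$, then apply Cauchy--Schwarz splitting the singular weight between the two differences: one factor is exactly the alignment dissipation already present in the energy identity (absorbed for $\varepsilon$ small), and the other is $\iint|\nabla\Psi(x)-\nabla\Psi(y)|^2|x-y|^{-N-2\alpha}=\|\nabla\Psi\|_{\dot H^\alpha}^2\leq C\|\Psi\|_{\dot H^2}^2\leq C\|\rho-1\|_{L^2}^2$, valid precisely because $\alpha<1$. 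Your ``main obstacle'' paragraph attributes the difficulty to the kernel lower bound $\phi_*$ and the convexity of $p$, but those enter elsewhere (coercivity of the dissipation and positivity of $L_3$, respectively); the genuinely singular term is $L_6$, and without the symmetrisation--interpolation step above your Part one does not close.
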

\begin{rem}
  Conditions \eqref{rhobar} and \eqref{zeromom} is naturally required
  to obtain \eqref{expDecay},
  due to the conservation of mass, and the conservation of momentum
  (when $\beta=0$), respectively. These two conditions can be easily
  removed by scaling on $\rho$ and shifting on $u$, and the estimate \eqref{expDecay} will be
  replaced by
  \[\int\rho(t,x)|u(t,x)-\bar{u}|^2\mathrm{d}x+\int(\rho(t,x)
      -\bar{\rho})^2\mathrm{d}x\leq Ce^{-\mu t},\]
  where $\bar{\rho}$ is the average density, and $\bar{u}$ is the
  average velocity.
\end{rem}

\subsection{Elementary estimates}
Next, we state the fractional Leibniz rule and commutator estimates
that will be used. We refer to e.g. \cite[Theorem 1.2]{DL},
\cite[Lemma 6.1]{MTX} for more details.
\begin{lem}[Fractional Leibniz rule]\label{lem:Leibniz}
  For $\lambda>0$,
    there exists a constant $C=C(\lambda, N)$ such that
    \begin{equation}\label{Leibnitz}
      \|\mathcal{L}^{\lambda}(fg)\|_{L^2}\leq C(\|\mathcal{L}^\lambda f\|_{L^2}\|g\|_{L^\infty}+\|\mathcal{L}^\lambda g\|_{L^2}\|f\|_{L^\infty}),
    \end{equation}
\end{lem}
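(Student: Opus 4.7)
The plan is to prove this fractional Leibniz rule via Littlewood--Paley theory and Bony's paraproduct decomposition. Let $\Delta_j$ and $S_j=\sum_{k<j}\Delta_k$ denote the standard dyadic projection and partial-sum operators, and decompose the product as
\[
fg = T_f g + T_g f + R(f,g),
\]
where $T_f g = \sum_j S_{j-1}f\,\Delta_j g$ is the paraproduct and $R(f,g) = \sum_{|j-k|\le 2}\Delta_j f\,\Delta_k g$ is the diagonal remainder. Since $\|\mathcal{L}^\lambda h\|_{L^2}^2 \sim \sum_j 2^{2\lambda j}\|\Delta_j h\|_{L^2}^2$, controlling $\|\mathcal{L}^\lambda(fg)\|_{L^2}$ reduces to estimating the dyadic blocks of each of the three pieces.

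For the paraproduct $T_f g$, each summand $S_{j-1}f\cdot\Delta_j g$ has Fourier support in an annulus of size $\sim 2^j$, so almost orthogonality survives after projecting onto frequency $2^\ell$. Bernstein's inequality gives
\[
\|\mathcal{L}^\lambda(S_{j-1}f\,\Delta_j g)\|_{L^2} \le C\,2^{\lambda j}\|S_{j-1}f\|_{L^\infty}\|\Delta_j g\|_{L^2} \le C\,2^{\lambda j}\|f\|_{L^\infty}\|\Delta_j g\|_{L^2},
\]
and square-summation in $j$ yields $\|\mathcal{L}^\lambda T_f g\|_{L^2} \le C\|f\|_{L^\infty}\|\mathcal{L}^\lambda g\|_{L^2}$. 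The bound for $T_g f$ is obtained by interchanging the roles of $f$ and $g$ and produces the second term on the right-hand side of \eqref{Leibnitz}.

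The remainder $R(f,g)$ is the delicate one: since $\Delta_j f\cdot\Delta_k g$ with $j\sim k$ is Fourier-supported in a ball of radius $\sim 2^j$ (not an annulus), it can contribute at arbitrarily low output frequencies. The estimate
\[
\|\Delta_\ell R(f,g)\|_{L^2} \le C\sum_{k\ge\ell-2}\|\Delta_k f\|_{L^\infty}\|\tilde\Delta_k g\|_{L^2} \le C\|f\|_{L^\infty}\sum_{k\ge\ell-2}\|\tilde\Delta_k g\|_{L^2}
\]
combined with the rewriting $2^{\lambda\ell}=2^{\lambda(\ell-k)}\cdot 2^{\lambda k}$ and the constraint $\ell\le k+2$ shows that the kernel $k\mapsto 2^{\lambda(\ell-k)}\mathbf{1}_{k\ge\ell-2}$ is $\ell^1$-summable precisely because $\lambda>0$. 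A Young-type convolution inequality on $\ell^2$ then gives
\[
\Bigl\|\bigl\{2^{\lambda\ell}\|\Delta_\ell R(f,g)\|_{L^2}\bigr\}_\ell\Bigr\|_{\ell^2} \le C\|f\|_{L^\infty}\|\mathcal{L}^\lambda g\|_{L^2}.
\]

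The main obstacle is precisely this remainder estimate: the positivity of $\lambda$ is essential for summability, and without it the diagonal interactions would not fit into the structure of the inequality. Two minor technical points need to be handled separately but are routine: (i) on $\mathbb{T}^N$ (or on $\mathbb{R}^N$ without decay), the zero Fourier mode of $fg$ must be treated apart from the dyadic bulk, and (ii) one may always interchange $f$ and $g$ in the remainder estimate to obtain either of the two terms on the right-hand side of \eqref{Leibnitz}, which explains the symmetric form of the stated inequality.
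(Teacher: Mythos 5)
The paper does not prove this lemma at all; it is quoted as a known result with pointers to \cite[Theorem 1.2]{DL} and \cite[Lemma 6.1]{MTX}, and the Littlewood--Paley/Bony paraproduct argument you give is precisely the standard proof found in those references. Your outline is correct: the two paraproducts are handled by Bernstein plus almost orthogonality, the remainder is the only place where $\lambda>0$ is genuinely needed (for the $\ell^1$-summability of $2^{\lambda(\ell-k)}\mathbf{1}_{k\ge \ell-2}$), and the low-frequency/zero-mode issue on $\mathbb{T}^N$ is routine as you say (split off the means and use that nonzero frequencies are bounded below by $1$, so $\|g-\bar g\|_{L^2}\le\|\mathcal{L}^\lambda g\|_{L^2}$).
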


\begin{lem}[Commutator estimates]\label{lem:comm}
    For $\lambda>1$, there exists a constant $C=C(\lambda, N)$ such that
    \begin{align}
      \|[\mathcal{L}^{\lambda},f]g\|_{L^2}\leq&\,  C(\|\mathcal{L}^\lambda f\|_{L^2}\|g\|_{L^\infty}+\|\nabla f\|_{L^\infty}\|\mathcal{L}^{\lambda-1}g\|_{L^2}), \label{comm21}\\
      \|[\mathcal{L}^\lambda,f,g]\|_{L^2}\leq&\,
      C(\|\mathcal{L}^{\lambda-1} f\|_{L^2}\|\nabla g\|_{L^\infty}
      + \|\nabla f\|_{L^\infty} \|\mathcal{L}^{\lambda-1} g\|_{L^2}), \label{comm3}
    \end{align}
    where
    \[
      [\mathcal{L}^{\lambda},f]g=\mathcal{L}^{\lambda}(fg)-f\mathcal{L}^{\lambda}g,\quad
      [\mathcal{L}^{\lambda},f,g]=\mathcal{L}^{\lambda}(fg)-f\mathcal{L}^{\lambda}g-g\mathcal{L}^{\lambda}f.
    \]
    For $\lambda\in(0,1]$,  there exists a constant $C=C(\lambda, N)$ such that
  \begin{equation}\label{comm22}
	\|[\mathcal{L}^{\lambda},f]g\|_{L^2}\leq C\|\mathcal{L}^{\lambda}f\|_{L^\infty}\|g\|_{L^2},
  \end{equation}
\end{lem}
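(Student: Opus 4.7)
The plan is to establish these three commutator bounds via a Littlewood--Paley decomposition combined with Bony's paraproduct calculus, which is the standard framework for Kato--Ponce type inequalities. Writing a product as
\[
  fh = T_f h + T_h f + R(f,h),
\]
where $T_f h = \sum_j S_{j-2} f\, \Delta_j h$ is the paraproduct and $R(f,h) = \sum_{|j-k|\leq 2} \Delta_j f\, \Delta_k h$ is the resonant remainder, and applying this decomposition to both $fg$ and $f\mathcal{L}^\lambda g$, one obtains the identity
\[
  [\mathcal{L}^\lambda, f] g \;=\; [\mathcal{L}^\lambda, T_f] g \;+\; \bigl(\mathcal{L}^\lambda T_g f - T_{\mathcal{L}^\lambda g} f\bigr) \;+\; \bigl(\mathcal{L}^\lambda R(f,g) - R(f, \mathcal{L}^\lambda g)\bigr),
\]
which reduces the problem to estimates on each piece dyadic block by dyadic block.

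For \eqref{comm21}, the paraproduct commutator $[\mathcal{L}^\lambda, T_f] g$ couples only low-frequency pieces of $f$ to high-frequency pieces of $g$. A first-order Taylor expansion of the symbol $|\xi|^\lambda - |\xi-\eta|^\lambda$ in the small parameter $\eta$ extracts a factor of $\eta\cdot\nabla|\xi|^{\lambda-1}$, producing precisely the bound $\|\nabla f\|_{L^\infty}\|\mathcal{L}^{\lambda-1} g\|_{L^2}$. The remaining two differences place the high frequency on $f$, so after summation and Bernstein's inequality they are controlled by $\|g\|_{L^\infty}\|\mathcal{L}^\lambda f\|_{L^2}$. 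The three-variable bound \eqref{comm3} follows in the same setup by applying the paraproduct decomposition to all three of $\mathcal{L}^\lambda(fg)$, $f\mathcal{L}^\lambda g$, and $g\mathcal{L}^\lambda f$: subtracting both of the last two cancels the two leading paraproduct contributions $T_f(\mathcal{L}^\lambda g)$ and $T_g(\mathcal{L}^\lambda f)$, leaving only mixed terms in which one derivative is already spent on each factor, giving the symmetric $\mathcal{L}^{\lambda-1}$-estimate on each side.

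For \eqref{comm22} with $\lambda\in(0,1]$, I would abandon the paraproduct machinery and instead use the singular-integral representation \eqref{LD} directly, which yields the pointwise identity
\[
  [\mathcal{L}^\lambda, f] g(x) \;=\; c_{\lambda/2}\, P.V. \int \frac{(f(x)-f(y))\,g(y)}{|x-y|^{N+\lambda}}\,\mathrm{d}y.
\]
The commutator thus acts on $g$ as a linear operator with kernel $K(x,y) = (f(x)-f(y))/|x-y|^{N+\lambda}$, and the task reduces to bounding its $L^2\to L^2$ operator norm by $\|\mathcal{L}^\lambda f\|_{L^\infty}$. This is where I expect the main obstacle to lie: a naive Schur test fails because the kernel is not absolutely integrable, and one instead needs a $T1$-type argument. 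The key input is that $\|\mathcal{L}^\lambda f\|_{L^\infty}$ encodes precisely the weak boundedness estimate $\bigl|\int_{B_r(x)}(f(x)-f(y))\,\mathrm{d}y\bigr| \lesssim r^{N+\lambda}\|\mathcal{L}^\lambda f\|_{L^\infty}$, while the off-diagonal part of $K$ satisfies the standard Calder\'on--Zygmund kernel bounds with constant proportional to $\|\mathcal{L}^\lambda f\|_{L^\infty}$; the endpoint $\lambda=1$ requires slightly more care because of the borderline singularity, but is handled by the same cancellation argument.
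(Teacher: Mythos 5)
The paper itself offers no proof of this lemma --- it points to \cite[Theorem 1.2]{DL} and \cite[Lemma 6.1]{MTX} --- so the comparison is really between your sketch and the standard arguments behind those citations. For \eqref{comm21} and \eqref{comm3} your paraproduct route is exactly that standard argument and it does go through: the splitting of $[\mathcal{L}^\lambda,f]g$ into $[\mathcal{L}^\lambda,T_f]g$ plus the terms carrying the high frequency on $f$ is correct, and the symbol expansion yields the $\|\nabla f\|_{L^\infty}\|\mathcal{L}^{\lambda-1}g\|_{L^2}$ piece. Two points you should make explicit: in \eqref{comm3} the subtraction does not literally cancel $T_f(\mathcal{L}^\lambda g)$ and $T_g(\mathcal{L}^\lambda f)$ but pairs them into the commutators $[\mathcal{L}^\lambda,T_f]g$ and $[\mathcal{L}^\lambda,T_g]f$, each gaining one derivative; and the surviving terms $T_{\mathcal{L}^\lambda g}f$, $T_{\mathcal{L}^\lambda f}g$ are precisely where $\lambda>1$ enters, since $\|S_{j-2}\mathcal{L}^\lambda g\|_{L^\infty}\leq C2^{j(\lambda-1)}\|\nabla g\|_{L^\infty}$ requires summing a geometric series with positive exponent $\lambda-1$.

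The genuine gap is in \eqref{comm22}. The singular-integral representation and the decision to use a $T1$ argument are right, but you have located the role of the hypothesis in the wrong place. Both the weak boundedness estimate you display and the Calder\'on--Zygmund kernel bounds for $K(x,y)=(f(x)-f(y))/|x-y|^{N+\lambda}$ follow already from the weaker seminorm $\|f\|_{\dot B^\lambda_{\infty,\infty}}$ (via $|f(x)-f(y)|\leq C\|f\|_{\dot B^\lambda_{\infty,\infty}}|x-y|^\lambda$ for $\lambda<1$), and for that seminorm the estimate \eqref{comm22} is \emph{false}: take $f$ a truncated lacunary sum $\sum_{j\leq J}2^{-j\lambda}\cos(2^jx_1)$ localized by a bump and $g$ the bump itself; then $[\mathcal{L}^\lambda,f]g\approx g\,\mathcal{L}^\lambda f$ has $L^2$ norm of order $J^{1/2}$ while $\|f\|_{\dot B^\lambda_{\infty,\infty}}\|g\|_{L^2}$ stays of order one. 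So weak boundedness plus kernel bounds cannot close the argument; the decisive input of the $T1$ theorem is the verification that $T1,T^*1\in \mathrm{BMO}$, and here one computes directly that $T1=c_{\lambda/2}^{-1}\mathcal{L}^\lambda f$ and $T^*1=-c_{\lambda/2}^{-1}\mathcal{L}^\lambda f$, both in $L^\infty\subset \mathrm{BMO}$ with norm exactly $\|\mathcal{L}^\lambda f\|_{L^\infty}$. This is the only step that uses the full hypothesis, and it is missing from your outline. Finally, at $\lambda=1$ the pointwise bound degenerates to the Zygmund class, $|f(x)-f(y)|\leq C\|\mathcal{L}f\|_{L^\infty}|x-y|(1+|\log|x-y||)$, so $K$ is no longer a standard kernel and ``the same cancellation argument'' does not apply verbatim; this endpoint needs a separate (e.g.\ Littlewood--Paley) treatment in which the high-$f$ interactions are estimated through $\mathcal{L}^\lambda f\in L^\infty$ directly.
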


The following composition estimate is useful for handling the
nonlinear mappings between $\sigma$ and $\rho$. It indicates that
$\sigma$ and $\rho$ have the same regularity if $\rho$ is away from zero.
\begin{lem}[Composition estimates] Let $\lambda>0$. There exists a constant
  $C=C(\gamma,\lambda,\|\sigma\|_{L^\infty},\rho_{\min}^{-1})$ such that
  \begin{equation}\label{composition}
    \|\mathcal{L}^\lambda(\rho(\sigma)-1)\|_{L^2}\leq C(\gamma,\lambda,\|\sigma\|_{L^\infty},\rho_{\min}^{-1}) \|\mathcal{L}^\lambda\sigma\|_{L^2}.
  \end{equation}
\end{lem}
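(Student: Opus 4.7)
The plan is to reduce the estimate to a composition statement for a smooth scalar $F$ vanishing at zero, then to bound $\|\mathcal{L}^\lambda F(\sigma)\|_{L^2}$ by $\|\mathcal{L}^\lambda \sigma\|_{L^2}$ in two regimes: a short Gagliardo argument for $\lambda \le 1$, and a Fa\`a di Bruno / paradifferential argument for $\lambda > 1$.

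\textbf{Reduction to a composition estimate.} Set $F(\sigma) := \rho(\sigma) - 1$. The formulas \eqref{rho} give $F(0) = 0$ and $F \in C^\infty$ on $\{\sigma : \rho(\sigma) > 0\}$, namely all of $\mathbb{R}$ when $\gamma = 1$ and $(-\tfrac{2\sqrt{\gamma}}{\gamma-1}, \infty)$ when $\gamma > 1$. Since $\|\sigma\|_{L^\infty} < \infty$ and $\rho \ge \rho_{\min} > 0$, the range of $\sigma(x)$ lies in a compact interval $K$ strictly inside this smoothness set, so every derivative $F^{(k)}|_K$ is bounded by a constant of the form $C(\gamma, k, \|\sigma\|_{L^\infty}, \rho_{\min}^{-1})$. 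In particular $M := \sup_K |F'|$ is finite and of the desired form.

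\textbf{The range $0 < \lambda \le 1$.} For $\lambda \in (0,1)$, I would invoke the Gagliardo singular-integral representation
\begin{equation*}
\|\mathcal{L}^\lambda g\|_{L^2}^2 = c_{N,\lambda}\iint \frac{|g(x) - g(y)|^2}{|x-y|^{N+2\lambda}}\, dx\, dy,
\end{equation*}
combined with the mean-value bound $|F(\sigma(x)) - F(\sigma(y))| \le M |\sigma(x) - \sigma(y)|$, which yields $\|\mathcal{L}^\lambda F(\sigma)\|_{L^2} \le M \|\mathcal{L}^\lambda \sigma\|_{L^2}$ in a single line. For $\lambda = 1$, the identity $\mathcal{L} = (-\Delta)^{1/2}$ gives $\|\mathcal{L} g\|_{L^2} = \|\nabla g\|_{L^2}$, and the classical chain rule $\nabla F(\sigma) = F'(\sigma) \nabla \sigma$ gives the same conclusion with the same constant $M$.

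\textbf{The range $\lambda > 1$, and the main obstacle.} Write $\lambda = k + \mu$ with $k \in \mathbb{Z}_{\ge 1}$ and $\mu \in [0, 1)$. I would apply Fa\`a di Bruno's formula to $\nabla^k F(\sigma)$, obtaining a finite sum of products $F^{(j)}(\sigma) \prod_i \nabla^{k_i}\sigma$ with $\sum_i k_i = k$ and each $k_i \ge 1$. Applying $\mathcal{L}^\mu$ and iterating the fractional Leibniz rule \eqref{Leibnitz} across the product, I would then estimate each factor $\|\mathcal{L}^{\mu + k_i}\sigma\|_{L^2}$ or $\|\nabla^{k_i}\sigma\|_{L^{p_i}}$ via Gagliardo-Nirenberg interpolation between $\|\sigma\|_{L^\infty}$ and $\|\mathcal{L}^\lambda \sigma\|_{L^2}$. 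Because the homogeneities $k_i / k$ sum to $1$ across each product, the total interpolation weight on $\|\mathcal{L}^\lambda \sigma\|_{L^2}$ is exactly $1$, while the bounded prefactors $F^{(j)}(\sigma)$ get absorbed into the multiplicative constant. A more streamlined alternative is Bony's paralinearization $F(\sigma) = T_{F'(\sigma)}\sigma + R(\sigma)$, with the paraproduct yielding the main contribution and $R$ a smoother remainder. The subtle point — and the main obstacle throughout this regime — is maintaining the \emph{linear} dependence on $\|\mathcal{L}^\lambda \sigma\|_{L^2}$: a naive application of Sobolev embedding would produce only polynomial dependence, which would be useless for the global estimates later in the paper. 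This linearity is precisely what the Fa\`a di Bruno homogeneity structure, or equivalently the paradifferential calculus with $L^\infty$ symbols $F^{(j)}(\sigma)$, is designed to preserve.
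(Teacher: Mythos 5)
Your argument is correct in substance but follows a genuinely different route from the paper. The paper does not prove the composition estimate itself: it cites it wholesale (Theorem A.1 of \cite{LT}, in the form $\|\mathcal{L}^\lambda(\rho(\sigma))\|_{L^2}\leq C\|\rho\|_{C^{\lceil\lambda\rceil}(\mathrm{supp}(\sigma))}(1+\|\sigma\|_{L^\infty}^\lambda)\|\mathcal{L}^\lambda\sigma\|_{L^2}$), and the entire content of the paper's proof is the explicit computation of $\|\rho\|_{C^{\lceil\lambda\rceil}}$ from the formula \eqref{rho} --- in particular tracking, for $\gamma>1$, that $\frac{d^{k}\rho}{d\sigma^{k}}\sim \rho^{1-k\frac{\gamma-1}{2}}$, which is why $\rho_{\min}^{-1}$ enters when $k>\frac{2}{\gamma-1}$. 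You do the opposite: you dispatch the derivative bounds in one sentence (correctly --- compactness of the range of $\sigma$ away from the singular endpoint is exactly what the paper's computation verifies, though an explicit line or two would be preferable since the $\rho_{\min}^{-1}$ dependence is the only nontrivial part of that step) and instead sketch a self-contained proof of the Moser-type composition inequality. Your $\lambda\le 1$ cases are complete and clean, and arguably more elementary than anything in the paper. For $\lambda>1$ your Fa\`a di Bruno plus Gagliardo--Nirenberg scheme is the standard proof of the cited theorem and the homogeneity bookkeeping you describe does preserve linearity in $\|\mathcal{L}^\lambda\sigma\|_{L^2}$; but this part is a plan rather than a proof --- the iteration of the fractional Leibniz rule across products of several factors and the choice of exponents $p_i$ is precisely where the technical work of \cite{LT} lives, so as written you have re-stated rather than re-proved the black box. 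What your approach buys is independence from the reference and a transparent elementary argument for $\lambda\le1$ (which covers the uses with $\lambda=\alpha, 2\alpha<2$ when $\alpha\le 1/2$); what the paper's approach buys is brevity and a fully rigorous treatment of all $\lambda>0$ at the cost of an external citation.
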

\begin{proof}
  We make use of the following composition estimate, the proof of
  which can be found, for instance, in \cite[Theorem A.1.]{LT}.
  \[\|\mathcal{L}^\lambda(\rho(\sigma))\|_{L^2}\leq
    C\|\rho\|_{C^{\lceil \lambda\rceil}(\text{supp}(\sigma))}(1+\|\sigma\|_{L^\infty}^\lambda)\|\mathcal{L}^\lambda\sigma\|_{L^2}.\]
  The term $\|\rho\|_{C^{\lceil \lambda\rceil}(\text{supp}(\sigma))}$ can be
  estimated via definition \eqref{rho}.
  For $\gamma=1$, $\frac{d^{\lceil \lambda\rceil}}{d\sigma^{\lceil
      \lambda\rceil}}\rho=e^\sigma$. Then,
  \[\|\rho\|_{C^{\lceil
        \lambda\rceil}(\text{supp}(\sigma))}\leq
      \exp\left(\|\sigma\|_{L^\infty}\right).\]
  For $\gamma>1$, we have
  \[\left|\frac{d^{\lceil \lambda\rceil}}{d\sigma^{\lceil
          \lambda\rceil}}\rho\right|\leq C(\gamma,
    \lambda)(\rho(\sigma))^{1-{\lceil \lambda\rceil}\frac{\gamma-1}{2}}
    \leq\begin{cases}
      C(\gamma,\lambda)(1+\|\sigma\|_{L^\infty})^{\frac{2}{\gamma-1}-\lceil
        \lambda\rceil}&\lceil \lambda\rceil\leq\frac{2}{\gamma-1}\\
      C(\gamma,\lambda)\rho_{\min}^{-(\lceil \lambda\rceil-\frac{2}{\gamma-1})}&
      \lceil \lambda\rceil>\frac{2}{\gamma-1}
    \end{cases}\]
  Therefore, we conclude that
  \[\|\rho\|_{C^{\lceil \lambda\rceil}(\text{supp}(\sigma))}\leq
  C(\gamma,\lambda,\|\sigma\|_{L^\infty},\rho_{\min}^{-1}).\]
\end{proof}
\begin{rem}\label{rem:C1}
A similar estimate holds when we replace $L^2$ by $L^\infty$ in
\eqref{composition}.
  \begin{equation}\label{compositioninf}
    \|\mathcal{L}^\lambda(\rho(\sigma)-1)\|_{L^\infty}\leq C(\gamma,\lambda,\|\sigma\|_{L^\infty},\rho_{\min}^{-1}) \|\sigma\|_{C^\lambda}.
  \end{equation}
  We will make use of this estimate for $\lambda\in(0,2)$.
  Note that \eqref{compositioninf} needs to be slightly modified when
  $\lambda=1$, where $\|\sigma\|_{C^1}$ should be replaced by
  $\|\sigma\|_{C^{1+\epsilon}}$ for any $\epsilon>0$ (and the constant
  depends on $\epsilon$). For the sake of simplicity, we will keep the
  compact notation $\|\sigma\|_{C^1}$ throughout the paper.
\end{rem}

\section{Local well-posedness}
The local well-posedness theory of the
system \eqref{eqn1a}-\eqref{eqn3a} can be established by using standard iteration scheme and the compactness argument. For simplicity, in this section we will give only the {\it a priori} energy estimates, and omit the detailed construction of approximation solutions.

We start with the $L^2$ energy estimate.
\begin{lem}[$L^2$ estimate]\label{lem:L2} Let $(\sigma,u)$ be the
  classical solution of $(\ref{eqn1a})-(\ref{eqn2a})$.
  Then the following estimate holds
\begin{align}\label{L2}
\frac{1}{2}\frac{d}{dt}\Big(\|\sigma\|_{L^2}^2+\|u\|_{L^2}^2\Big)+&\beta\|u\|_{L^2}^2+\frac{\rho_{\min}}{2}\|\mathcal{L}^{\alpha}u\|_{L^2}^2\\
  \leq&\,
C(\|\sigma\|_{L^\infty}, \rho_{\min}^{-1})(\|\nabla\cdot u\|_{L^\infty}+\|\sigma\|_{C^{2\alpha}}) (\|u\|_{L^2}^2+\|\nabla \sigma\|_{L^2}^2).\nonumber
\end{align}
\end{lem}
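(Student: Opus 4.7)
The plan is to derive \eqref{L2} by combining the basic $L^2$ energies of \eqref{eqn1a} and \eqml{eqn2a} and then carefully unpacking the singular alignment terms via the commutator structure highlighted in \eqref{com}. First I would test \eqref{eqn1a} against $\sigma$ and \eqref{eqn2a} against $u$, then add. The linear cross terms combine into $\int(\tfrac{\gamma-1}{2}\sigma+\sqrt{\gamma})\,\nabla\!\cdot\!(\sigma u)\,\mathrm{d}x$, whose $\sqrt{\gamma}$-part integrates to zero. After integrating by parts, what remains on the transport side are integrals of the form $\int(\nabla\!\cdot\!u)\sigma^2\,\mathrm{d}x$ and $\int(\nabla\!\cdot\!u)|u|^2\,\mathrm{d}x$. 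The second one is immediately $\|\nabla\!\cdot\!u\|_{L^\infty}\|u\|_{L^2}^2$; for the first one I would undo the integration by parts and estimate $|\int u\sigma\!\cdot\!\nabla\sigma\,\mathrm{d}x|\leq \|\sigma\|_{L^\infty}\|u\|_{L^2}\|\nabla\sigma\|_{L^2}$, which explains why the mixed form $\|u\|_{L^2}^2+\|\nabla\sigma\|_{L^2}^2$ appears on the right of \eqref{L2}.

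For the nonlinear dissipative terms, I would rewrite
\[
 -\mathcal{L}^{2\alpha}u-\mathcal{L}^{2\alpha}((\rho-1)u)+u\,\mathcal{L}^{2\alpha}(\rho-1)=-\rho\,\mathcal{L}^{2\alpha}u-[\mathcal{L}^{2\alpha},\rho-1,u],
\]
which surfaces the cancellation structure of \eqref{com}. Pairing with $u$ and using self-adjointness of $\mathcal{L}^\alpha$,
\[
 \int \rho u\!\cdot\!\mathcal{L}^{2\alpha}u\,\mathrm{d}x=\int\rho|\mathcal{L}^\alpha u|^2\,\mathrm{d}x+\int[\mathcal{L}^\alpha,\rho]u\cdot\mathcal{L}^\alpha u\,\mathrm{d}x,
\]
so the positive part produces $\rho_{\min}\|\mathcal{L}^\alpha u\|_{L^2}^2$, and the commutator, handled via \eqref{comm22} with $\lambda=\alpha\leq 1$ together with the composition bound \eqref{compositioninf}, gives $\|\mathcal{L}^\alpha(\rho-1)\|_{L^\infty}\|u\|_{L^2}\|\mathcal{L}^\alpha u\|_{L^2}\leq C\|\sigma\|_{C^\alpha}\|u\|_{L^2}\|\mathcal{L}^\alpha u\|_{L^2}$, which a Young inequality splits between $\tfrac{\rho_{\min}}{2}\|\mathcal{L}^\alpha u\|_{L^2}^2$ on the left and a $C\|\sigma\|_{C^{2\alpha}}\|u\|_{L^2}^2$ contribution on the right (using $\|\sigma\|_{C^\alpha}^2\lesssim\|\sigma\|_{L^\infty}\|\sigma\|_{C^{2\alpha}}$).

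The last piece is $\int u\cdot[\mathcal{L}^{2\alpha},\rho-1,u]\,\mathrm{d}x$, which I would handle case by case. For $2\alpha\leq 1$, \eqref{comm22} gives $C\|\mathcal{L}^{2\alpha}(\rho-1)\|_{L^\infty}\|u\|_{L^2}^2$, bounded by $C\|\sigma\|_{C^{2\alpha}}\|u\|_{L^2}^2$ through \eqref{compositioninf}. For $2\alpha>1$ the trilinear estimate \eqref{comm3} produces $\|\mathcal{L}^{2\alpha-1}(\rho-1)\|_{L^2}\|\nabla u\|_{L^\infty}+\|\nabla(\rho-1)\|_{L^\infty}\|\mathcal{L}^{2\alpha-1}u\|_{L^2}$. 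Composition reduces $\rho-1$ to $\sigma$, and then the interpolation $\|\mathcal{L}^{2\alpha-1}u\|_{L^2}\lesssim\|u\|_{L^2}^{1-(2\alpha-1)/\alpha}\|\mathcal{L}^\alpha u\|_{L^2}^{(2\alpha-1)/\alpha}$ together with a Young inequality feeds the high-derivative part back into the $\rho_{\min}\|\mathcal{L}^\alpha u\|_{L^2}^2$ dissipation while leaving the low-derivative part proportional to $\|\sigma\|_{C^{2\alpha}}(\|u\|_{L^2}^2+\|\nabla\sigma\|_{L^2}^2)$.

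The main obstacle is precisely this last step in the regime $2\alpha>1$: the trilinear commutator generates non-$L^2$ fractional norms of both $\rho-1$ and $u$ that do not immediately match the right-hand side of \eqref{L2}, so the bookkeeping of the Young inequality, which must simultaneously recover the $\tfrac{\rho_{\min}}{2}$ coefficient in front of $\|\mathcal{L}^\alpha u\|_{L^2}^2$ and produce only $\|\nabla\sigma\|_{L^2}$ (rather than higher derivatives of $\sigma$) on the right, is the delicate point. Throughout, the composition estimates \eqref{composition}--\eqref{compositioninf} are what allow every occurrence of $\rho$ to be converted into an $L^\infty$- or $C^{2\alpha}$-norm of $\sigma$, ensuring that the final constant depends only on $\|\sigma\|_{L^\infty}$ and $\rho_{\min}^{-1}$ as stated.
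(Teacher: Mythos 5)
Your energy identity, the treatment of the transport and pressure terms, and the extraction of $\rho_{\min}\|\mathcal{L}^\alpha u\|_{L^2}^2$ via $\int\rho u\cdot\mathcal{L}^{2\alpha}u\,\mathrm{d}x=\int\rho|\mathcal{L}^\alpha u|^2\mathrm{d}x+\int[\mathcal{L}^\alpha,\rho]u\cdot\mathcal{L}^\alpha u\,\mathrm{d}x$ with \eqref{comm22} all match the paper's argument. The genuine gap is in your last step, the term $\int u\cdot[\mathcal{L}^{2\alpha},\rho-1,u]\,\mathrm{d}x$ in the regime $2\alpha>1$. The trilinear estimate \eqref{comm3} produces $\|\nabla u\|_{L^\infty}$ and $\|\mathcal{L}^{2\alpha-1}(\rho-1)\|_{L^2}$, and neither fits the stated right-hand side of \eqref{L2}: only $\|\nabla\cdot u\|_{L^\infty}$ (not the full gradient) appears there, and since $2\alpha-1\in(0,1)$ the norm $\|\mathcal{L}^{2\alpha-1}\sigma\|_{L^2}$ is a \emph{low-order} fractional norm that cannot be bounded by $\|\nabla\sigma\|_{L^2}$ alone (interpolating it would require $\|\sigma\|_{L^2}$, which is absent from the right-hand side). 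You correctly identify this bookkeeping as the delicate point, but it is not merely delicate — as set up, it does not close.

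The fix is that the trilinear cancellation is simply not needed at the $L^2$ level; it only becomes essential in the $\dot H^s$ estimate, where the analogue of \eqref{cancelation} is genuinely uncontrollable. Here the paper keeps the two alignment contributions separate: the term $I_5=\int u\,\mathcal{L}^{2\alpha}(\rho(\sigma)-1)\cdot u\,\mathrm{d}x$ is bounded crudely by $\|\mathcal{L}^{2\alpha}(\rho(\sigma)-1)\|_{L^\infty}\|u\|_{L^2}^2\leq C\|\sigma\|_{C^{2\alpha}}\|u\|_{L^2}^2$ via \eqref{compositioninf}, with no commutator at all; and in $I_4=-\int\mathcal{L}^{2\alpha}((\rho-1)u)\cdot u\,\mathrm{d}x$ one factor $\mathcal{L}^\alpha$ is moved onto the second $u$ by self-adjointness before commuting, so the only commutator ever needed is $[\mathcal{L}^\alpha,\rho-1]u$ of order $\alpha\leq1$, for which \eqref{comm22} applies uniformly in $\alpha\in(0,1)$ with no case distinction. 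Replacing your trilinear step by this direct estimate of $I_5$ removes the obstruction and yields exactly the claimed bound.
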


\begin{proof}
Firstly, by multiplying (\ref{eqn1a}) and (\ref{eqn2a}) by $\sigma$ and $u$ respectively, summing up and integrating over $\Omega$, we obtain
\begin{align}\label{gul}
\frac{1}{2}\frac{d}{dt}\int \Big(|\sigma|^2&+|u^2|\Big)\mathrm{d}x+\beta\|u\|_{L^2}^2=-\int \mathcal{L}^{2\alpha}u\cdot u\mathrm{d}x\\
&\quad-\int \Big(\sigma u\cdot \nabla \sigma+u\cdot \nabla u\cdot u\Big)\mathrm{d}x
-\int \Big( \frac{\gamma-1}{2}\sigma+\sqrt{\gamma}\Big)\nabla\cdot(\sigma u)\mathrm{d}x\nonumber\\
&\quad-\int \mathcal{L}^{2\alpha}((\rho(\sigma)-1)u)\cdot u\mathrm{d}x+\int u\mathcal{L}^{2\alpha}(\rho(\sigma)-1)\cdot u\mathrm{d}x=\sum_{i=1}^{5}I_i.\nonumber
\end{align}
We estimate $I_i$ item by item. The commutator estimate \eqref{comm22}
is used for $I_4$, and the composition estimate \eqref{compositioninf}
is used for $I_4$ and $I_5$.
\begin{align*}
I_1&=-\int \mathcal{L}^{2\alpha}u\cdot u\mathrm{d}x=-\int |\mathcal{L}^{\alpha}u|^2\mathrm{d}x;\\
I_2&=-\int \Big(\sigma u\cdot \nabla \sigma+u\cdot \nabla u\cdot u\Big)\mathrm{d}x=-\int\sigma u\cdot \nabla \sigma\mathrm{d}x+\frac{1}{2}\int |u|^2\nabla\cdot u\mathrm{d}x\\
&\leq~ \|\sigma\|_{L^\infty}\|u\|_{L^2}\|\nabla \sigma\|_{L^2}+\frac{1}{2}\|\nabla\cdot u\|_{L^\infty}\|u\|^2_{L^2}\leq C(\|\sigma\|_{L^\infty}+\|\nabla\cdot  u\|_{L^\infty})(\|\nabla \sigma\|_{L^2}^2+\|u\|^2_{L^2});
\end{align*}
\begin{align*}
I_3&=\frac{\gamma-1}{2}\int \sigma u\cdot\nabla\sigma\mathrm{d}x
\leq C\|\sigma\|_{L^\infty}(\|\nabla \sigma\|_{L^2}^2+\|u\|^2_{L^2});\\
I_4&=-\int \mathcal{L}^{2\alpha}((\rho(\sigma)-1)u)\cdot u\mathrm{d}x=-\int \mathcal{L}^{\alpha}((\rho(\sigma)-1)u)\cdot\mathcal{L}^{\alpha} u\mathrm{d}x\\
&=-\int (\rho(\sigma)-1)\mathcal{L}^{\alpha}u\cdot\mathcal{L}^{\alpha} u\mathrm{d}x-\int[\mathcal{L}^{\alpha},\rho(\sigma)-1]u\cdot\mathcal{L}^{\alpha} u\mathrm{d}x\\
&\leq(1-\rho_{\min})\|\mathcal{L}^{\alpha} u\|_{L^2}^{2}+C\|\mathcal{L}^{\alpha}(\rho(\sigma)-1)\|_{L^\infty}\|u\|_{L^2}\|\mathcal{L}^{\alpha} u\|_{L^2}\\
&\leq\left(1-\frac{\rho_{\min}}{2}\right)\|\mathcal{L}^{\alpha} u\|_{L^2}^{2}+C(\|\sigma\|_{L^\infty},\rho_{\min}^{-1})\|\sigma\|_{C^\alpha}\|u\|_{L^2}^2;\\
I_5&=\int u\mathcal{L}^{2\alpha}(\rho(\sigma)-1)\cdot u\mathrm{d}x\leq
     \|\mathcal{L}^{2\alpha}(\rho(\sigma)-1)\|_{L^\infty}\|u\|_{L^2}^2
     \leq C(\|\sigma\|_{L^\infty}, \rho_{\min}^{-1})\|\sigma\|_{C^{2\alpha}}\|u\|_{L^2}^2.
\end{align*}
Collecting the above estimates into (\ref{gul}), we obtain the
$L^2$-estimate of $(\sigma, u)$ \eqref{L2}.
\end{proof}

Next,we provide the $\dot{H}^s$ energy estimate.
\begin{lem}[$\dot{H}^s$ energy estimate]\label{lem:Hs}
Let $(\sigma,u)$ be a classical solution of
$(\ref{eqn1a})-(\ref{eqn2a})$.
Then the following estimate holds for $s>\frac{N}{2}+\max\{1,2\alpha\}$
\begin{align}\label{guhs}
  \frac{1}{2}\frac{d}{dt}\Big(\|\mathcal{L}^s\sigma\|_{L^2}^2+&\|\mathcal{L}^su\|_{L^2}^2\Big)+\beta\|\mathcal{L}^s u\|_{L^2}^2+\frac{\rho_{\min}}{2}\|\mathcal{L}^{s+\alpha}u\|_{L^2}^2\\
&\leq C(\|\sigma\|_{L^\infty},\rho_{\min}^{-1})(\|\nabla
    u\|_{L^\infty}+\|\sigma\|_{C^{\max\{1,2\alpha\}}})(\|u\|_{H^s}^2+\|\mathcal{L}^s\sigma\|_{L^2}^2+\|\mathcal{L}^{s+\alpha-1}\sigma\|_{L^2}^2).\nonumber
\end{align}
\end{lem}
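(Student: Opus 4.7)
The plan is to apply $\mathcal{L}^s$ to each of \eqref{eqn1a} and \eqref{eqn2a}, pair in $L^2$ with $\mathcal{L}^s\sigma$ and $\mathcal{L}^s u$ respectively, and sum. The time derivatives produce the LHS of \eqref{guhs}, while the damping and linear fractional viscosity contribute $-\beta\|\mathcal{L}^s u\|_{L^2}^2-\|\mathcal{L}^{s+\alpha}u\|_{L^2}^2$. The two $\sqrt{\gamma}$-linear coupling terms $\sqrt{\gamma}\int\mathcal{L}^s(\nabla\cdot u)\,\mathcal{L}^s\sigma\,dx$ and $\sqrt{\gamma}\int\mathcal{L}^s\nabla\sigma\cdot\mathcal{L}^s u\,dx$ cancel exactly after integration by parts, since $\mathcal{L}^s$ commutes with $\nabla$.

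For the transport terms $\int\mathcal{L}^s(u\cdot\nabla f)\cdot\mathcal{L}^s f\,dx$ with $f\in\{\sigma,u\}$, I split $\mathcal{L}^s(u\cdot\nabla f)=u\cdot\nabla\mathcal{L}^s f+[\mathcal{L}^s,u]\nabla f$: the first piece yields $-\frac{1}{2}\int|\mathcal{L}^s f|^2\,\nabla\cdot u\,dx$ after integration by parts, and the commutator is bounded by the Kato--Ponce estimate \eqref{comm21}. For the nonlinear pressure coupling $\frac{\gamma-1}{2}\int[\mathcal{L}^s(\sigma\nabla\cdot u)\,\mathcal{L}^s\sigma+\mathcal{L}^s(\sigma\nabla\sigma)\cdot\mathcal{L}^s u]\,dx$, pulling $\sigma$ outside both terms and integrating by parts produces a leading-order cancellation analogous to the $\sqrt{\gamma}$ case, leaving the remainder $-\frac{\gamma-1}{2}\int(\nabla\sigma\cdot\mathcal{L}^s u)\,\mathcal{L}^s\sigma\,dx$ together with $[\mathcal{L}^s,\sigma]$-commutators again controlled via \eqref{comm21}. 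Combined with \eqref{compositioninf}, all these contributions fit inside the allowed RHS.

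The main obstacle is the nonlinear alignment piece
\begin{equation*}
J=-\int\mathcal{L}^s\bigl(\mathcal{L}^{2\alpha}((\rho-1)u)\bigr)\cdot\mathcal{L}^s u\,dx+\int\mathcal{L}^s\bigl(u\,\mathcal{L}^{2\alpha}(\rho-1)\bigr)\cdot\mathcal{L}^s u\,dx,
\end{equation*}
which exhibits the commutator structure \eqref{com}. A naive two-term commutator splitting would force the norm $\|\mathcal{L}^{s+\alpha}(\rho-1)\|_{L^2}\sim\|\mathcal{L}^{s+\alpha}\sigma\|_{L^2}$ to appear, which is \emph{not} on the RHS of \eqref{guhs}. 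The plan is to exploit this commutator structure at the $\dot H^s$ level: apply the three-term decomposition $\mathcal{L}^\lambda(fg)=f\mathcal{L}^\lambda g+g\mathcal{L}^\lambda f+[\mathcal{L}^\lambda,f,g]$ with $\lambda=s+2\alpha$ on $\mathcal{L}^{s+2\alpha}((\rho-1)u)$, and the two-term commutator on $\mathcal{L}^s(u\,\mathcal{L}^{2\alpha}(\rho-1))$. The dangerous $u\,\mathcal{L}^{s+2\alpha}(\rho-1)$ pieces then \emph{cancel exactly} between the two contributions. What survives is (i) a dissipative diagonal $-\int(\rho-1)\mathcal{L}^{s+2\alpha}u\cdot\mathcal{L}^s u\,dx$, which after shifting one $\mathcal{L}^\alpha$ via the further commutator $[\mathcal{L}^\alpha,\rho-1]\mathcal{L}^{s+\alpha}u$ (bounded by \eqref{comm22}) combines with the linear dissipation into $-\int\rho|\mathcal{L}^{s+\alpha}u|^2\,dx\le-\rho_{\min}\|\mathcal{L}^{s+\alpha}u\|_{L^2}^2$; (ii) the three-term commutator $[\mathcal{L}^{s+2\alpha},\rho-1,u]$, estimated by \eqref{comm3} and the composition bounds \eqref{composition}-\eqref{compositioninf}, together with interpolation and the Sobolev embedding afforded by $s>\tfrac{N}{2}+\max\{1,2\alpha\}$, producing at most $\|\mathcal{L}^{s+\alpha-1}\sigma\|_{L^2}$-type quantities paired with $\|\nabla u\|_{L^\infty}$ and $\|\sigma\|_{C^{\max\{1,2\alpha\}}}$; and (iii) the two-term commutator $[\mathcal{L}^s,u]\mathcal{L}^{2\alpha}(\rho-1)$ handled through \eqref{comm21}, with $\|\mathcal{L}^{2\alpha}(\rho-1)\|_{L^\infty}$ controlled by $\|\sigma\|_{C^{2\alpha}}$ via \eqref{compositioninf}.

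A final Young inequality absorbs the residual $\varepsilon\|\mathcal{L}^{s+\alpha}u\|_{L^2}^2$ contributions from (i)--(iii) into the LHS, preserving $\frac{\rho_{\min}}{2}\|\mathcal{L}^{s+\alpha}u\|_{L^2}^2$, and collecting the remaining bounds yields \eqref{guhs}. The principal subtlety throughout is this bookkeeping of the three-term cancellation: the decomposition must be chosen so that the only $\sigma$-Sobolev norm surviving on the right is of order at most $s+\alpha-1$ (after interpolation where needed), matching the budget of the claimed estimate.
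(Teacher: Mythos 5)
Your overall architecture is the paper's: the transport and pressure terms are handled exactly as in the paper via symmetrization and \eqref{comm21}, and for the alignment terms you correctly identify that the two pieces must be combined so that the untreatable top-order term $u\,\mathcal{L}^{\mathrm{top}}(\rho-1)$ cancels between the three-term and two-term commutator expansions, leaving the diagonal $-\int\rho|\mathcal{L}^{s+\alpha}u|^2\,dx$. However, there is a genuine gap in your bookkeeping: you expand at order $s+2\alpha$ against the test function $\mathcal{L}^s u$, whereas the budget of \eqref{guhs} forces the expansion to be done at order $s+\alpha$ against $\mathcal{L}^{s+\alpha}u$. Concretely, your term (ii) is $\|[\mathcal{L}^{s+2\alpha},\rho-1,u]\|_{L^2}$, which by \eqref{comm3} is bounded by $\|\mathcal{L}^{s+2\alpha-1}(\rho-1)\|_{L^2}\|\nabla u\|_{L^\infty}+\|\nabla (\rho-1)\|_{L^\infty}\|\mathcal{L}^{s+2\alpha-1}u\|_{L^2}$, and your term (iii) likewise produces $\|\nabla u\|_{L^\infty}\|\mathcal{L}^{s-1}\mathcal{L}^{2\alpha}(\rho-1)\|_{L^2}=\|\nabla u\|_{L^\infty}\|\mathcal{L}^{s+2\alpha-1}(\rho-1)\|_{L^2}$ via \eqref{comm21}. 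When $\alpha>\tfrac12$ the norm $\|\mathcal{L}^{s+2\alpha-1}\sigma\|_{L^2}$ is of order strictly greater than $s$, hence strictly above both $\|\mathcal{L}^s\sigma\|_{L^2}$ and $\|\mathcal{L}^{s+\alpha-1}\sigma\|_{L^2}$ appearing on the right of \eqref{guhs}. Since there is no dissipation in $\sigma$ anywhere in the scheme, neither interpolation nor Young's inequality can absorb it: interpolation only bounds a norm by two norms bracketing it from above and below, and here the needed norm sits above everything available. (The excess on $u$, $\|\mathcal{L}^{s+2\alpha-1}u\|_{L^2}$, is harmless precisely because $s+2\alpha-1<s+\alpha$ and the $u$-dissipation is available; the asymmetry is the whole point.)

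The repair is to redistribute the $\alpha$ extra derivatives onto $u$ \emph{before} expanding: write $J_4+J_5=-\int\bigl(\mathcal{L}^{s+\alpha}((\rho-1)u)-\mathcal{L}^{s-\alpha}(u\,\mathcal{L}^{2\alpha}(\rho-1))\bigr)\cdot\mathcal{L}^{s+\alpha}u\,\mathrm{d}x$ by self-adjointness of $\mathcal{L}^{\alpha}$, then apply the three-term commutator at order $s+\alpha$ and the two-term commutator at order $s-\alpha$. The dangerous terms $u\,\mathcal{L}^{s+\alpha}(\rho-1)$ still cancel exactly, the diagonal term is already $-\int(\rho-1)|\mathcal{L}^{s+\alpha}u|^2\,\mathrm{d}x$ with no further $[\mathcal{L}^\alpha,\rho-1]$ correction needed, and \eqref{comm3} and \eqref{comm21} now produce exactly $\|\mathcal{L}^{s+\alpha-1}(\rho(\sigma)-1)\|_{L^2}$, i.e.\ $\|\mathcal{L}^{s+\alpha-1}\sigma\|_{L^2}$ by \eqref{composition}, which is precisely the term budgeted in \eqref{guhs}. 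This is the route the paper takes; your version coincides with it only in the range $\alpha\le\tfrac12$, where $s+2\alpha-1\le s$ and interpolation between $\|\mathcal{L}^{s+\alpha-1}\sigma\|_{L^2}$ and $\|\mathcal{L}^{s}\sigma\|_{L^2}$ does close the argument.
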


\begin{proof}
We apply $\mathcal{L}^s$ to (\ref{eqn1a}), (\ref{eqn2a}),  multiply the resulting identities by $\mathcal{L}^s \sigma,\ \mathcal{L}^s u$ respectively,  and integrate over $\Omega$ to obtain
\begin{align}\label{guh}
\frac{1}{2}\frac{d}{dt}\int&\Big( |\mathcal{L}^s\sigma|^2+|\mathcal{L}^su|^2\Big)\mathrm{d}x+\beta\|\mathcal{L}^s u\|_{L^2}^2\nonumber\\
&=-\int\mathcal{L}^{s+2\alpha}u\cdot\mathcal{L}^su\mathrm{d}x-\int \Big( \mathcal{L}^s(u\cdot \nabla \sigma)\mathcal{L}^s \sigma+\mathcal{L}^s(u\cdot \nabla u)\cdot \mathcal{L}^s u\Big)\mathrm{d}x\nonumber\\
&\quad -\frac{\gamma-1}{2}\int \Big(\mathcal{L}^s(\sigma\nabla\cdot u) \mathcal{L}^s\sigma+\mathcal{L}^s(\sigma \nabla \sigma)\cdot \mathcal{L}^su\Big)\mathrm{d}x\nonumber\\
&\quad-\int\mathcal{L}^{s+2\alpha}((\rho(\sigma)-1)u)\cdot\mathcal{L}^su\mathrm{d}x+\int\mathcal{L}^s(u\mathcal{L}^{2\alpha}(\rho(\sigma)-1))\cdot \mathcal{L}^su\mathrm{d}x=\sum_{i=1}^{5}J_i.
\end{align}
We directly get
\begin{align}
J_1&=-\int\mathcal{L}^{s+2\alpha}u\cdot\mathcal{L}^su\mathrm{d}x=-\int\Big|\mathcal{L}^{s+\alpha}u\Big|^2\mathrm{d}x.\label{i1}
\end{align}
Applying the commutator estimate \eqref{comm21}, we get
\begin{align}
J_2&=-\int \Big( \mathcal{L}^s(u\cdot \nabla \sigma)\mathcal{L}^s \sigma+\mathcal{L}^s(u\cdot \nabla u)\cdot \mathcal{L}^s u\Big)\mathrm{d}x\nonumber\\
&=-\int \Big( u\cdot \mathcal{L}^s\nabla\sigma\mathcal{L}^s \sigma +[\mathcal{L}^s, u]\cdot\nabla \sigma\mathcal{L}^s \sigma +u\cdot \mathcal{L}^s\nabla u\cdot \mathcal{L}^s u +[\mathcal{L}^s,u]\cdot \nabla u\cdot \mathcal{L}^s u\Big)\mathrm{d}x\nonumber\\
&=\int \Big( \frac{1}{2}\nabla\cdot u |\mathcal{L}^s\sigma|^2 -[\mathcal{L}^s, u]\cdot\nabla \sigma\mathcal{L}^s \sigma +\frac{1}{2}\nabla\cdot u |\mathcal{L}^s u|^2 -[\mathcal{L}^s,u]\cdot \nabla u\cdot \mathcal{L}^s u\Big)\mathrm{d}x\nonumber\\
&\leq\frac12\|\nabla\cdot u\|_{L^\infty} (\|\mathcal{L}^s\sigma\|_{L^2}^2+\|\mathcal{L}^s u\|_{L^2}^2)+ \|[\mathcal{L}^s, u]\cdot\nabla \sigma\|_{L^2}\|\mathcal{L}^s \sigma \|_{L^2} +\|[\mathcal{L}^s,u]\cdot \nabla u\|_{L^2}\| \mathcal{L}^s u\|_{L^2}\nonumber\\
&\leq \frac12\|\nabla\cdot u\|_{L^\infty} (\|\mathcal{L}^s\sigma\|_{L^2}^2+\|\mathcal{L}^s u\|_{L^2}^2)+ C(\|\mathcal{L}^s  u\|_{L^2}\|\nabla \sigma\|_{L^\infty}+\|\nabla u\|_{L^\infty}\|\mathcal{L}^{s-1}\nabla\sigma\|_{L^2})\|\mathcal{L}^s \sigma \|_{L^2}\nonumber\\
&\qquad +C(\|\mathcal{L}^s  u\|_{L^2}\|\nabla u\|_{L^\infty}+\|\nabla u\|_{L^\infty}\|\mathcal{L}^{s-1}\nabla u\|_{L^2})\| \mathcal{L}^s u\|_{L^2}\nonumber\\
&\leq C(\|\nabla u\|_{L^\infty}+\|\nabla\sigma\|_{L^\infty}) (\|\mathcal{L}^s\sigma\|_{L^2}^2+\|\mathcal{L}^s u\|_{L^2}^2).\label{i2}
\end{align}
Similarly we can do the estimates for $J_3$,
\begin{align}
J_3& = -\frac{\gamma-1}{2}\int \Big(\mathcal{L}^s(\sigma\nabla\cdot u) \mathcal{L}^s\sigma+\mathcal{L}^s(\sigma \nabla \sigma)\cdot \mathcal{L}^su\Big)\mathrm{d}x \nonumber\\
& = -\frac{\gamma-1}{2}\int \Big(\sigma \nabla\cdot (\mathcal{L}^su \mathcal{L}^s\sigma)+ [\mathcal{L}^s,\sigma](\nabla\cdot u) \mathcal{L}^s\sigma+[\mathcal{L}^s,\sigma ]\nabla \sigma\cdot \mathcal{L}^su\Big)\mathrm{d}x \nonumber\\
&\leq C\|\nabla\sigma\|_{L^\infty}(\|\mathcal{L}^s\sigma\|_{L^2}^2+\|\mathcal{L}^s u\|_{L^2}^2)+ C(\|\mathcal{L}^s  \sigma\|_{L^2}\|\nabla \cdot u\|_{L^\infty}+\|\nabla \sigma\|_{L^\infty}\|\mathcal{L}^{s-1}\nabla\cdot u\|_{L^2})\|\mathcal{L}^s \sigma \|_{L^2}\nonumber\\
&\qquad +C(\|\mathcal{L}^s  \sigma\|_{L^2}\|\nabla \sigma\|_{L^\infty}+\|\nabla \sigma\|_{L^\infty}\|\mathcal{L}^{s-1}\nabla\sigma\|_{L^2})\| \mathcal{L}^s u\|_{L^2}\nonumber\\
&\leq C(\|\nabla\cdot u\|_{L^\infty}+\|\nabla\sigma\|_{L^\infty}) (\|\mathcal{L}^s\sigma\|_{L^2}^2+\|\mathcal{L}^s u\|_{L^2}^2).\label{i3}
\end{align}
The estimates for $J_4$ and $J_5$ have to be handled together,
applying commutator estimates \eqref{comm21} and \eqref{comm3}, one obtains
\begin{align*}
J_4+J_5=& -\int\mathcal{L}^{s+2\alpha}((\rho(\sigma)-1)u)\cdot\mathcal{L}^su\mathrm{d}x+\int\mathcal{L}^s(u\mathcal{L}^{2\alpha}(\rho(\sigma)-1))\cdot \mathcal{L}^su\mathrm{d}x\\
=& -\int\Big(\mathcal{L}^{s+\alpha}((\rho(\sigma)-1)u)-\mathcal{L}^{s-\alpha}(u\mathcal{L}^{2\alpha}(\rho(\sigma)-1))\Big)\cdot \mathcal{L}^{s+\alpha}u\mathrm{d}x\\
=& -\int(\rho(\sigma)-1)\mathcal{L}^{s+\alpha}u\cdot \mathcal{L}^{s+\alpha}u\mathrm{d}x\\
& +\int\Big([\mathcal{L}^{s+\alpha},u,\rho(\sigma)-1]-[\mathcal{L}^{s-\alpha},u]\big(\mathcal{L}^{2\alpha}(\rho(\sigma)-1)\big)\Big)\cdot \mathcal{L}^{s+\alpha}u\mathrm{d}x\\
\leq&\,\, (1-\rho_{\min})\|\mathcal{L}^{s+\alpha}u\|_{L^2}^2 \\
& +\Big(\|\mathcal{L}^{s+\alpha-1}u\|_{L^2} \|\nabla(\rho(\sigma)-1)\|_{L^\infty}+\|\nabla u\|_{L^\infty}\|\mathcal{L}^{s+\alpha-1}(\rho(\sigma)-1)\|_{L^2}\Big)\|\mathcal{L}^{s+\alpha}u\|_{L^2}\\
& + \Big(\|\mathcal{L}^{s-\alpha}u\|_{L^2} \|\mathcal{L}^{2\alpha}(\rho(\sigma)-1)\|_{L^\infty}
+\|\nabla u\|_{L^\infty}\|\mathcal{L}^{s+\alpha-1}(\rho(\sigma)-1)\|_{L^2}\Big)\|\mathcal{L}^{s+\alpha}u\|_{L^2}.
\end{align*}
We remark that in the third equality, there is a cancelation of the term
\begin{equation}\label{cancelation}
  \int u\mathcal{L}^{s+\alpha}(\rho(\sigma)-1)
  \cdot \mathcal{L}^{s+\alpha}u\mathrm{d}x,
\end{equation}
which itself can not be controlled.
This is the place where the commutator structure \eqref{com} is
crucially used.

Next, we apply the composition estimates \eqref{composition},
\eqref{compositioninf},
and use the facts $s-\alpha<s$, $s+\alpha-1<s$ to get
\begin{align}\label{i45}
  &J_4+J_5\leq\left(1-\frac{\rho_{\min}}{2}\right)\|\mathcal{L}^{s+\alpha}u\|_{L^2}^2\\
  &+C(\|\sigma\|_{L^\infty},\rho_{\min}^{-1})(\|\nabla
    u\|_{L^\infty}+\|\sigma\|_{C^{\max\{1,2\alpha\}}})(\|\mathcal{L}^{s+\alpha-1}u\|_{L^2}^2+\|\mathcal{L}^{s-\alpha}u\|_{L^2}^2+\|\mathcal{L}^{s+\alpha-1}\sigma\|_{L^2}^2)\nonumber\\
  &\leq\left(1-\frac{\rho_{\min}}{2}\right)\|\mathcal{L}^{s+\alpha}u\|_{L^2}^2
        +C(\|\sigma\|_{L^\infty},\rho_{\min}^{-1})(\|\nabla
    u\|_{L^\infty}+\|\sigma\|_{C^{\max\{1,2\alpha\}}})(\|u\|_{H^s}^2+\|\mathcal{L}^{s+\alpha-1}\sigma\|_{L^2}^2)\nonumber
\end{align}
Finally, we plug in the estimates \eqref{i1}-\eqref{i45} to
\eqref{guh} and finish the $\dot{H}^s$ estimate \eqref{guhs}.
\end{proof}

With the energy estimates above, we are ready to prove the local well-posedness theory.
\begin{proof}[Proof of Theorem \ref{local}]
  We combine the estimates \eqref{L2} and \eqref{guhs} for the full
  $H^s$ estimate
  \begin{align}\label{Hsest}
    \frac{1}{2}\frac{d}{dt}\Big(\|\sigma\|_{H^s}^2+\|u\|_{H^s}^2\Big)+&\beta\|u\|_{H^s}^2+\frac{\rho_{\min}}{2}\|u\|_{H^{s+\alpha}}^2\\
  \leq&\, C(\|\sigma\|_{L^\infty},\rho_{\min}^{-1}) (\|\nabla
    u\|_{L^\infty}+\|\sigma\|_{C^{\max\{1,2\alpha\}}})(\|u\|_{H^s}^2+\|\sigma\|_{H^s}^2).\nonumber
  \end{align}
  For $s>\frac{N}{2}+\max\{1,2\alpha\}$, Sobolev embedding implies
  \[\|\nabla u\|_{L^\infty}+\|\sigma\|_{C^{\max\{1,2\alpha\}}}\leq
    C(\|u\|_{H^s}+\|\sigma\|_{H^s})\leq \sqrt{2}C(\|u\|_{H^s}^2+\|\sigma\|_{H^s}^2)^{1/2}.\]
  As $\|\sigma_0\|_{L^\infty}$ and $\rho_{\min}^{-1}(0)$ are bounded,
  there exists a time $T_0$ such that $\|\sigma(t,\cdot)\|_{L^\infty}$
  and $\rho_{\min}^{-1}(t)$ are uniformly bounded for
  $t\in[0,T_0]$. Then, there is a universal constant $C(T_0)$ such that
  \[C(\|\sigma(t,\cdot)\|_{L^\infty},\rho_{\min}^{-1}(t))\leq
    C(T_0).\]
  Consequently, we have
  \[\frac{1}{2}\frac{d}{dt}\Big(\|\sigma\|_{H^s}^2+\|u\|_{H^s}^2\Big)\leq
    C(T_0) \Big(\|\sigma\|_{H^s}^2+\|u\|_{H^s}^2\Big)^{3/2},\quad
    \forall~t\in[0,T_0].\]
  Standard ODE theory implies the existence of time $T\leq T_0$
  such that $\|\sigma\|_{H^s}^2+\|u\|_{H^s}^2$ is bounded for
  $t\in[0,T]$.

  Integrating \eqref{Hsest} in time, we get
  \[\int_0^T\|u(t,\cdot)\|_{H^{s+\alpha}}^2\mathrm{d}t\leq
    \max_{t\in[0,T]}\rho_{\min}^{-1}(t)
  \left(\|\sigma_0\|_{H^s}^2+\|u_0\|_{H^s}^2+2C(T_0)\int_0^T \left(\|\sigma\|_{H^s}^2+\|u\|_{H^s}^2\right)\mathrm{d}t\right)<+\infty,\]
which implies $u\in L^2([0,T], (H^{s+\alpha})^N)$.

Finally, we show that the regularity  \eqref{localreg} holds as long
as condition \eqref{BKM} is satisfied.
Applying the Gronwall's inequality on \eqref{Hsest},  we get
 \[\|\sigma(t,\cdot)\|_{H^s}^2+\|u(t,\cdot)\|_{H^s}^2\leq
   \Big(\|\sigma_0\|_{H^s}^2+\|u_0\|_{H^s}^2\Big)
 \exp\left[\int_0^t C(\|\sigma\|_{L^\infty},\rho_{\min}^{-1}) (\|\nabla
   u\|_{L^\infty}+\|\sigma\|_{C^{\max\{1,2\alpha\}}})\mathrm{d}t\right].\]
Therefore, the solution exists up to time $T$ as long as
\begin{equation}\label{BKM2}
  \int_0^T C(\|\sigma\|_{L^\infty},\rho_{\min}^{-1}) (\|\nabla
  u\|_{L^\infty}+\|\sigma\|_{C^{\max\{1,2\alpha\}}})\mathrm{d}t<+\infty.
\end{equation}
From \eqref{eqn1o}, we get
$(\partial_t+u\cdot\nabla)\rho=-(\nabla\cdot u)\rho$. Therefore, we
can bound $\rho(t,x)$ by
\[\rho_{\min}(0)\exp\left[-\int_0^t\|\nabla\cdot u(\tau,\cdot)\|_{L^\infty}\mathrm{d}\tau\right]
  \leq \rho(t,x) \leq
 \|\rho_0\|_{L^\infty}\exp\left[\int_0^t\|\nabla\cdot u(\tau,\cdot)\|_{L^\infty}\mathrm{d}\tau\right].\]
Hence, if \eqref{BKM} holds, $\rho_{\min}^{-1}(t)$ is bounded for
$t\in[0,T]$.
Also, using the relation \eqref{sigma}, we have that
$\|\sigma(t,\cdot)\|_{L^\infty}$ is bounded for $t\in[0,T]$.
Hence, $C(\|\sigma\|_{L^\infty},\rho_{\min}^{-1})$ is bounded by a
universal constant, and condition \eqref{BKM2} is reduced to
\eqref{BKM}. This finishes the proof.
\end{proof}

\section{Global regularity for small data}
In this section, we discuss the global well-posedness of the system
\eqref{eqn1a}-\eqref{eqn3a}, with small initial data
\[\|\sigma_0\|_{H^s}^2+\|u_0\|_{H^s}^2\leq\delta_0^2,\]
for a small parameter $\delta_0>0$ to be chosen.

We will show that the solution stays small. In particular, the
following lemma suffices to show Theorem \ref{global}.
\begin{lem}[Propagation of smallness]\label{lem:small}
  Assume $(\sigma, u)$ is a classical solution of
  \eqref{eqn1a}-\eqref{eqn3a}, such that
  \begin{equation}\label{xiao}
    \underset{{0\leq t\leq T}}{\sup}\Big(\|\sigma(t,\cdot)\|_{H^s}^2+\|u(t,\cdot)\|_{H^s}^2 \Big)\leq \delta^2,
  \end{equation}
  where $\delta>0$ is sufficiently small.
  Then the solution stays small, namely, there exists a universal
   constant $C_0>1$, such that
   \begin{equation}\label{decay}
     \|\sigma(t,\cdot)\|_{H^s}^2+\|u(t,\cdot)\|_{H^s}^2\leq
     C_0^2(\|\sigma_0\|_{H^s}^2+\|u_0\|_{H^s}^2),\quad\forall~t\in[0,T].
   \end{equation}
\end{lem}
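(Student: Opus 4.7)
The plan is to upgrade the $H^s$ energy estimate \eqref{Hsest} by adding an auxiliary cross-energy that captures dissipation for $\sigma$ via the pressure coupling, and to close the resulting estimate by a modified-energy argument. Under the hypothesis \eqref{xiao} with $\delta$ small, the Sobolev embedding (valid since $s>\tfrac{N}{2}+\max\{1,2\alpha\}$) gives
$$\|\sigma\|_{L^\infty}+\|\sigma\|_{C^{\max\{1,2\alpha\}}}+\|\nabla u\|_{L^\infty}\le C\delta,$$
so $\rho_{\min}\ge \tfrac12$ and the constant $C(\|\sigma\|_{L^\infty},\rho_{\min}^{-1})$ in Lemma \ref{lem:Hs} is uniformly bounded. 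Writing $E(t):=\|\sigma\|_{H^s}^2+\|u\|_{H^s}^2$, estimate \eqref{Hsest} collapses into the clean form
$$\tfrac12\tfrac{d}{dt}E+\beta\|u\|_{H^s}^2+\tfrac14\|u\|_{H^{s+\alpha}}^2\le C\delta\,E.$$

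This inequality dissipates only $u$, while $\sigma$ still feeds the right-hand side. To manufacture dissipation for $\sigma$ I introduce the cross-functional
$$F(t):=\int \mathcal{L}^{s-1}u\cdot\mathcal{L}^{s-1}\nabla\sigma\,\mathrm dx,\qquad |F(t)|\le \|u\|_{H^s}\|\sigma\|_{H^s}\le E(t),$$
together with its low-order analog $\int u\cdot\nabla\sigma\,\mathrm dx$. Differentiating $F$ in time and substituting \eqref{eqn1a}--\eqref{eqn2a}, the principal linear piece $\partial_tu\approx-\sqrt{\gamma}\nabla\sigma$ produces the favorable dissipative term $-\sqrt{\gamma}\|\mathcal{L}^s\sigma\|_{L^2}^2$, while the mirror piece from $\partial_t\sigma\approx-\sqrt{\gamma}\nabla\!\cdot\!u$ gives $+\sqrt{\gamma}\|\mathcal{L}^{s-1}\nabla\!\cdot\! u\|_{L^2}^2\le \sqrt{\gamma}\|u\|_{H^{s+\alpha}}^2$. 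The fractional term $\mathcal{L}^{2\alpha}u$ paired against $\mathcal{L}^{s-1}\nabla\sigma$ is split by Cauchy--Schwarz into $\eta\|u\|_{H^{s+\alpha}}^2+C_\eta\|\sigma\|_{H^s}^2$, the damping $\beta u$ is handled analogously, and the remaining quadratic transport and pressure contributions, together with the singular commutator $[\mathcal{L}^{2\alpha},u](\rho(\sigma)-1)$ treated through Lemmas \ref{lem:Leibniz}--\ref{lem:comm} and the composition bound \eqref{composition}, are cubic in smallness and bounded by $C\delta E$. The net outcome is
$$\tfrac{d}{dt}F\le -c_1\|\sigma\|_{H^s}^2+K\|u\|_{H^{s+\alpha}}^2+C\delta\,E.$$

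Form $\mathcal{E}(t):=E(t)+\varepsilon F(t)$ with $\varepsilon>0$ chosen small enough that $\tfrac12 E\le \mathcal{E}\le 2E$ and $\varepsilon K\le \tfrac18$. Adding the two estimates, the $u$-dissipation budget is preserved and a new $\sigma$-dissipation of size $\tfrac{\varepsilon c_1}{2}\|\sigma\|_{H^s}^2$ appears. When $\Omega=\mathbb{R}^N$ and $\beta>0$, the term $\beta\|u\|_{H^s}^2$ directly controls the full $u$-norm; when $\Omega=\mathbb{T}^N$ the missing $\|u\|_{L^2}^2$ piece is supplied either by $\beta$-damping or, for $\beta=0$, by Poincar\'e applied after noting that momentum conservation keeps the spatial mean of $u$ small. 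In either case we arrive at
$$\tfrac{d}{dt}\mathcal{E}+c_0\,\mathcal{E}\le C\delta\,\mathcal{E}.$$
Choosing $\delta_0$ so that $C\delta_0\le c_0/2$ forces $\tfrac{d}{dt}\mathcal{E}\le-\tfrac{c_0}{2}\mathcal{E}$, whence $\mathcal{E}(t)\le\mathcal{E}(0)$; the equivalence $\mathcal{E}\asymp E$ then delivers \eqref{decay} with $C_0=2$.

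The main obstacle is the bookkeeping inside $\tfrac{d}{dt}F$: the regularity level of $F$ must be tuned so that the loss coming from $\partial_t\sigma$ stays strictly inside the $\|u\|_{H^{s+\alpha}}^2$ budget provided by \eqref{Hsest} and does not produce a higher-order norm that cannot be absorbed. Placing $F$ at $H^{s-1}$-derivatives of $u$ paired with $H^{s-1}$ of $\nabla\sigma$---so the whole expression sits at the $H^s$ scale on both sides---is precisely the choice that makes the calculus close. In parallel, the singular commutator $[\mathcal{L}^{2\alpha},u](\rho(\sigma)-1)$ inside $\tfrac{d}{dt}F$ cannot be controlled term by term and must be handled via the cancellation structure \eqref{com}, exactly as in Lemma \ref{lem:Hs}.
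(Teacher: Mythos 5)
Your overall architecture is the paper's: reduce \eqref{Hsest} using smallness, add a cross-functional at the $\mathcal{L}^{s-1}u\cdot\nabla\mathcal{L}^{s-1}\sigma$ level (plus its low-order analog) to manufacture dissipation for $\sigma$ through the pressure coupling, form a modified energy equivalent to $\|\sigma\|_{H^s}^2+\|u\|_{H^s}^2$, and absorb the error terms. However, there is a genuine flaw in the bookkeeping of the $\sigma$-dissipation, and it propagates into a final inequality that is false. The cross term only produces \emph{homogeneous} dissipation: pairing $\partial_t u\approx-\sqrt{\gamma}\nabla\sigma$ against $\mathcal{L}^{s-1}\nabla\sigma$ yields $-\sqrt{\gamma}\|\mathcal{L}^{s}\sigma\|_{L^2}^2$, and the low-order analog yields $-c\|\nabla\sigma\|_{L^2}^2$; together these control $\|\nabla\sigma\|_{H^{s-1}}^2$ but never $\|\sigma\|_{L^2}^2$. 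There is no mechanism anywhere in the system that dissipates the zero mode of $\sigma$ (the paper emphasizes exactly this point after \eqref{energyest} and again in Remark \ref{rem:longtime}). Consequently your claimed $\frac{d}{dt}F\le -c_1\|\sigma\|_{H^s}^2+\dots$ and, more importantly, the closing coercive inequality $\frac{d}{dt}\mathcal{E}+c_0\mathcal{E}\le C\delta\,\mathcal{E}$ with $\mathcal{E}\asymp\|\sigma\|_{H^s}^2+\|u\|_{H^s}^2$ cannot hold: it would imply exponential decay of $\|\sigma\|_{L^2}$ in $\mathbb{R}^N$, which the estimates do not give.

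The error becomes fatal at the first step if taken literally: you collapse \eqref{Hsest} to $\frac12\frac{d}{dt}E+\dots\le C\delta E$, with $E$ containing $\|\sigma\|_{L^2}^2$ on the right. Since $C\delta\|\sigma\|_{L^2}^2$ cannot be absorbed by any dissipation, Gronwall then only yields $E(t)\le E(0)e^{C\delta T}$, a $T$-dependent bound that does not give \eqref{decay} with a universal $C_0$. The correct route is to verify that every $\sigma$-term on the right-hand side of \eqref{guhs} carries at least one derivative --- this is precisely where the hypothesis $s>2-\alpha$ is used, so that $\|\mathcal{L}^{s+\alpha-1}\sigma\|_{L^2}\le\|\nabla\sigma\|_{H^{s-1}}$ --- giving the right-hand side $C\delta(\|u\|_{H^s}^2+\|\nabla\sigma\|_{H^{s-1}}^2)$ as in \eqref{energyest}. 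The $\|\nabla\sigma\|_{H^{s-1}}^2$ term is then absorbed by the cross-term dissipation, and one concludes only the monotonicity $\frac{d}{dt}\mathcal{E}\le 0$, which is all the lemma needs. Your remaining steps (equivalence of $\mathcal{E}$ and $E$ for small $\varepsilon$, absorption of $\|u\|_{H^s}^2$ by damping or by Poincar\'e on the torus, the cancellation structure \eqref{com} for the singular commutator) match the paper and are fine.
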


\begin{proof}[Proof of Theorem \ref{global}]
  Let $\delta$ be the small parameter in Lemma \ref{lem:small}.
  Let $T_{\max}$ be the maximum time that the solution stays small,
  namely
  \[T_{\max}:=\inf\big\{t\geq0~:~\|\sigma(t,\cdot)\|_{H^s}^2+\|u(t,\cdot)\|_{H^s}^2>\delta^2\big\}.\]
  Suppose $T_{\max}<\infty$, then by continuity argument,
  \[\|\sigma(T_{\max},\cdot)\|_{H^s}^2+\|u(T_{\max},\cdot)\|_{H^s}^2=\delta^2..\]
  However, if we pick $\delta_0<\frac{\delta}{C_0}$, from Lemma
  \ref{lem:small} we get
  \[\|\sigma(T_{\max},\cdot)\|_{H^s}^2+\|u(T_{\max},\cdot)\|_{H^s}^2\leq C_0^2(
    \|\sigma_0\|_{H^s}^2+\|u_0\|_{H^s}^2)\leq
    C_0^2\delta_0^2<\delta^2.\]
  This leads to a contradiction.
  Therefore, $T_{\max}=+\infty$, finishing the proof.
\end{proof}

We are left to show Lemma \ref{lem:small}.
Recall the $H^s$ estimate (combination of Lemmas \ref{lem:L2} and
\ref{lem:Hs})
\begin{equation}\label{energyest}
  \frac{d}{dt}\Big(\|\sigma\|_{H^s}^2+\|u\|_{H^s}^2\Big)+\beta\|u\|_{H^s}^2
  +\frac{1}{4}\|\mathcal{L}^\alpha u\|_{H^s}^2
\leq  C\delta(\|u\|_{H^s}^2+\|\nabla\sigma\|_{H^{s-1}}^2).
\end{equation}
It is slightly different from \eqref{Hsest}, as we will comment in the following.

First, the smallness condition \eqref{xiao} and Sobolev embedding
implies
\[\|\nabla u\|_{L^\infty}+\|\sigma\|_{C^{\max\{1,2\alpha\}}}<
  C\delta.\]
Second, picking $\delta$ small enough, we have $\rho_{\min}>\frac{1}{2}$
and $\|\sigma\|_{L^\infty}<C\delta$. Then, $C(\|\sigma\|_{L^\infty},
\rho_{\min}^{-1})$ has a uniform bound. Therefore, we can drop its
dependence on $\sigma$.
Finally, as we assume $s>2-\alpha$ so that
$s+\alpha-1\in(1,s)$, the term
$\|\mathcal{L}^{s+\alpha-1}\sigma\|_{L^2}$ in \eqref{guhs} can be
controlled by $\|\nabla\sigma\|_{H^{s-1}}$.
It does not depend on $\|\sigma\|_{L^2}$.
This is important as we do not have dissipation estimate on
$\|\sigma\|_{L^2}$.

To show \eqref{decay}, we need to control the right hand side of
\eqref{energyest}.

The term $\|u\|_{H^s}^2$ can be controlled by the
damping or the dissipation. If $\beta>0$, then we can pick
$\delta<\frac{\beta}{2C}$, such that
\[C\delta\|u\|_{H^s}^2\leq \frac{\beta}{2}\|u\|_{H^s}^2.\]
If $\beta=0$, we can use the dissipation term to control $\|\nabla
u\|_{H^{s-1}}^2$. The $\|u\|_{L^2}^2$ can only be controlled in the
case $\Omega=\mathbb{T}^N$ by Poincare inequality
$\|u\|_{L^2}\leq C\|\nabla u\|_{L^2}$, which implies
$\|u\|_{H^s}\leq C\|\mathcal{L}^\alpha u\|_{H^s}$. Similarly, we can
pick $\delta$ small enough so that
\[C\delta\|u\|_{H^s}^2\leq \frac{1}{8}\|\mathcal{L}^\alpha u\|_{H^s}^2.\]

To sum up, there exists a positive number $\nu>0$, such that
\begin{equation}\label{guh3}
  \frac{d}{dt}\Big(\|\sigma\|_{H^s}^2+\|u\|_{H^s}^2\Big)+\nu\|u\|_{H^{s+\alpha}}^2
  \leq C\delta \|\nabla\sigma\|_{H^{s-1}}^2.
\end{equation}

To control the remaining term $\|\nabla\sigma\|_{H^{s-1}}^2$, we adopt
the idea introduced in \cite{STW}, using cross terms to obtain
dissipation estimates for $\sigma$.

\begin{lem}[Dissipation estimates for $\sigma$]\label{lem:cross}
\begin{equation}\label{hh1}
	\frac{d}{dt}\int
        u\cdot\nabla\sigma\mathrm{d}x+\frac{3\sqrt{\gamma}}{4}\|\nabla
        \sigma\|_{L^2}^2\leq  C \|u\|_{H^s}^2,
	\end{equation}
\begin{equation}\label{hhs}
\frac{d}{dt}\int \mathcal{L}^{s-1}u\cdot\nabla \mathcal{L}^{s-1}\sigma\mathrm{d}x+\frac{\sqrt{\gamma}}{2}\|\mathcal{L}^s \sigma\|_{L^2}^2\leq C\|u\|_{H^{s+\alpha}}^2 +C\delta\|\nabla\sigma\|_{L^2}^2.
\end{equation}
\end{lem}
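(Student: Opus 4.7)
\medskip
\noindent\textbf{Proof plan for Lemma \ref{lem:cross}.}
The plan for both bounds is the same: differentiate the cross product in time, substitute the equations \eqref{eqn1a}--\eqref{eqn2a} (respectively their $\mathcal{L}^{s-1}$-derivatives), integrate by parts to move the time derivative of $\sigma$ off $\nabla\sigma$, and then collect the one term that produces the desired dissipation in $\sigma$. The dissipative mechanism is exactly the coupling $-\sqrt\gamma\nabla\sigma$ on the right-hand side of the momentum equation: pairing it with $u$ (respectively $\mathcal{L}^{s-1}u$) produces $-\sqrt{\gamma}\|\nabla\sigma\|_{L^2}^2$ (resp.\ $-\sqrt{\gamma}\|\mathcal{L}^s\sigma\|_{L^2}^2$), and everything else has to be made smaller than this.

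For \eqref{hh1}, I would write
\[
\frac{d}{dt}\!\int u\cdot\nabla\sigma\,\mathrm{d}x
=\int \partial_t u\cdot\nabla\sigma\,\mathrm{d}x
-\int (\nabla\cdot u)\,\partial_t\sigma\,\mathrm{d}x,
\]
and plug in \eqref{eqn1a}, \eqref{eqn2a}. The linear term $-(\frac{\gamma-1}{2}\sigma+\sqrt\gamma)\nabla\sigma$ from $\partial_t u$ supplies $-\sqrt\gamma\|\nabla\sigma\|_{L^2}^2+O(\|\sigma\|_{L^\infty}\|\nabla\sigma\|_{L^2}^2)$; the second integral, after substituting $\partial_t\sigma=-u\cdot\nabla\sigma-(\frac{\gamma-1}{2}\sigma+\sqrt\gamma)\nabla\cdot u$, gives the "good" bookkeeping term $\sqrt\gamma\|\nabla\cdot u\|_{L^2}^2\le C\|u\|_{H^s}^2$. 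The remaining contributions — $-\beta\int u\cdot\nabla\sigma$, $-\int u\cdot\nabla u\cdot\nabla\sigma$, $-\int\mathcal{L}^{2\alpha}u\cdot\nabla\sigma$, and the two singular-kernel terms containing $\rho(\sigma)-1$ — are each bounded by a product of $\|u\|_{H^s}$ (possibly weighted by $\delta$ via Sobolev embedding) and $\|\nabla\sigma\|_{L^2}$, so that Cauchy--Young puts them below $\tfrac{\sqrt\gamma}{4}\|\nabla\sigma\|_{L^2}^2+C\|u\|_{H^s}^2$. Note $\int \mathcal{L}^{2\alpha}u\cdot\nabla\sigma\le \|u\|_{H^{2\alpha}}\|\nabla\sigma\|_{L^2}\le C\|u\|_{H^s}\|\nabla\sigma\|_{L^2}$ since $s>2\alpha$, so this one is already fine without $\alpha$-regularity loss. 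Combining gives \eqref{hh1}.

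For \eqref{hhs}, I would apply $\mathcal{L}^{s-1}$ to both equations and repeat the same computation with $\mathcal{L}^{s-1}u$ and $\nabla\mathcal{L}^{s-1}\sigma$. The same cancellation yields $-\sqrt\gamma\|\mathcal{L}^s\sigma\|_{L^2}^2$ plus $\sqrt\gamma\|\nabla\cdot\mathcal{L}^{s-1}u\|_{L^2}^2\le C\|u\|_{H^s}^2\le C\|u\|_{H^{s+\alpha}}^2$. The fractional dissipation contributes $-\int\mathcal{L}^{s-1+2\alpha}u\cdot\nabla\mathcal{L}^{s-1}\sigma\,\mathrm{d}x$, which, using $s-1+2\alpha\le s+\alpha$ (since $\alpha\le1$), is bounded by $\|u\|_{H^{s+\alpha}}\|\mathcal{L}^s\sigma\|_{L^2}\le C\|u\|_{H^{s+\alpha}}^2+\tfrac{\sqrt\gamma}{8}\|\mathcal{L}^s\sigma\|_{L^2}^2$. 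The transport and pressure nonlinearities $\mathcal{L}^{s-1}(u\cdot\nabla u)$, $\mathcal{L}^{s-1}(\sigma\nabla\sigma)$, $\mathcal{L}^{s-1}(u\cdot\nabla\sigma)$, $\mathcal{L}^{s-1}(\sigma\nabla\cdot u)$ are controlled by the fractional Leibniz rule (Lemma \ref{lem:Leibniz}) and Sobolev embedding, producing $C\delta\,(\|u\|_{H^s}^2+\|\mathcal{L}^s\sigma\|_{L^2}^2)$; the $\sigma$-part can be absorbed by the dissipation on the left.

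The main obstacle will be the last two (singular) terms of \eqref{eqn2a} at high regularity. After applying $\mathcal{L}^{s-1}$ to them and pairing with $\nabla\mathcal{L}^{s-1}\sigma$, I would rewrite them as a single commutator, \emph{cf.}\ \eqref{com}, and use \eqref{comm21} and \eqref{comm3} together with the composition estimate \eqref{composition} as in the proof of Lemma \ref{lem:Hs} (the $J_4+J_5$ block). The point is that the uncontrollable piece analogous to \eqref{cancelation} must again cancel; what survives is dominated by $(\|\nabla u\|_{L^\infty}+\|\sigma\|_{C^{\max\{1,2\alpha\}}})\cdot(\|u\|_{H^{s+\alpha-1}}+\|\sigma\|_{H^{s+\alpha-1}})$ times $\|\mathcal{L}^s\sigma\|_{L^2}$, so under the smallness hypothesis and $s>2-\alpha$, this splits via Young into $C\|u\|_{H^{s+\alpha}}^2+C\delta\|\nabla\sigma\|_{L^2}^2+\tfrac{\sqrt\gamma}{8}\|\mathcal{L}^s\sigma\|_{L^2}^2$, which is exactly the form claimed in \eqref{hhs}. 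Absorbing the $\mathcal{L}^s\sigma$ contributions into the left-hand side finishes the proof.
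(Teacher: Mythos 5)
Your plan is correct and follows essentially the same route as the paper: differentiate the cross terms, substitute \eqref{eqn1a}--\eqref{eqn2a}, extract the $-\sqrt{\gamma}\|\nabla\sigma\|_{L^2}^2$ (resp.\ $-\sqrt{\gamma}\|\mathcal{L}^s\sigma\|_{L^2}^2$) from the pressure coupling, control the lower-order and nonlinear terms by Leibniz/Young under the smallness assumption, and handle the singular terms in \eqref{hhs} via the same commutator cancellation used in the $J_4+J_5$ block. No substantive differences from the paper's argument.
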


\begin{proof} First, we can directly calculate to obtain
  \[
    \frac{d}{dt}\int u\cdot\nabla\sigma\mathrm{d}x=-\int \sigma_t \nabla\cdot u\mathrm{d}x+\int u_t \cdot\nabla\sigma\mathrm{d}x.
  \]
  The two terms separately are
\begin{align*}
  -\int \sigma_t \nabla\cdot u\mathrm{d}x=&
  \int \left(u\cdot\nabla\sigma +
    \left(\frac{\gamma-1}{2}\sigma+\sqrt{\gamma}\right)\nabla\cdot u\right)(\nabla\cdot u)\mathrm{d}x\\
  \leq&\, \|u\|_{L^\infty}\|\nabla\sigma\|_{L^2}\|\nabla\cdot u\|_{L^2}
  +\left(\frac{\gamma-1}{2}\|\sigma\|_{L^\infty}+\sqrt{\gamma}\right)\|\nabla\cdot u\|_{L^2}^2\\
  \leq &\, C\|\nabla\cdot u\|_{L^2}^2 + C\delta \|\nabla\sigma\|_{L^2}^2
\end{align*}
and
\begin{align*}
\int u_t\cdot\nabla\sigma\mathrm{d}x=&-\sqrt{\gamma}\|\nabla \sigma\|_{L^2}^2-\beta\int u\cdot\nabla\sigma\mathrm{d}x\\
&-\int u\cdot\nabla u\cdot\nabla\sigma\mathrm{d}x-\frac{\gamma-1}{2}\int \sigma\nabla \sigma\cdot\nabla\sigma\mathrm{d}x-\int \mathcal{L}^{2\alpha}u\cdot\nabla\sigma\mathrm{d}x\\
&-\int \mathcal{L}^{2\alpha}((\rho(\sigma)-1)u)\cdot\nabla\sigma\mathrm{d}x+\int u\mathcal{L}^{2\alpha}(\rho(\sigma)-1)\cdot\nabla\sigma\mathrm{d}x\\
  \leq & -\frac{7\sqrt{\gamma}}{8}\|\nabla \sigma\|_{L^2}^2 +
         C(\|u\|_{L^2}^2+\|u\cdot\nabla u\|_{L^2}^2+\|\mathcal{L}^{2\alpha} u\|_{L^2}^2)\\
 & + C(\|\mathcal{L}^{2\alpha}((\rho(\sigma)-1)u)\|_{L^2}^2+
 \|u\mathcal{L}^{2\alpha}(\rho(\sigma)-1)\|_{L^2}^2)\\
 \leq& -\frac{7\sqrt{\gamma}}{8}\|\nabla \sigma\|_{L^2}^2 +C\|u\|_{H^{2\alpha}}^2\\
       &+C\Big(\|\mathcal{L}^{2\alpha} u\|_{L^2}\|\rho(\sigma)-1\|_{L^\infty}+
       \|u\|_{L^\infty}\|\mathcal{L}^{2\alpha}(\rho(\sigma)-1)\|_{L^2}\Big)^2\\
       \leq & -\frac{7\sqrt{\gamma}}{8}\|\nabla \sigma\|_{L^2}^2 + C\|u\|_{H^s}^2.
\end{align*}
The fractional Leibniz rule \eqref{Leibnitz} is used in the second
last inequality, followed by the estimates
\[\|\mathcal{L}^{2\alpha}(\rho(\sigma)-1)\|_{L^2}\leq
C\|\sigma\|_{\dot H^{2\alpha}}<\delta,\quad \|u\|_{L^\infty}\leq C\|u\|_{H^s}.\]
Sum up the above estimates, and choose $\delta$ small enough so
that $C\delta\|\nabla\sigma\|_{L^2}^2$ in the first term is absorbed by
$\frac{\sqrt\gamma}{8}\|\nabla\sigma\|_{L^2}^2$ in the second term.
We end up with \eqref{hh1}.

Now we prove \eqref{hhs}, by a direct computation we obtain
\[
  \frac{d}{dt}\int\mathcal{L}^{s-1}u\cdot\nabla\mathcal{L}^{s-1}\sigma\mathrm{d}x
  =-\int\mathcal{L}^{s-1}\sigma_t\mathcal{L}^{s-1}\nabla\cdot u\mathrm{d}x
  +\int \mathcal{L}^{s-1}u_t \cdot\nabla\mathcal{L}^{s-1}\sigma\mathrm{d}x.
\]
For the first term, apply the Leibniz rule \eqref{Leibnitz} and get
\begin{align*}
  -\int\mathcal{L}^{s-1}\sigma_t\mathcal{L}^{s-1}\nabla\cdot u\mathrm{d}x=&\,
  \sqrt{\gamma}\int|\mathcal{L}^{s-1}\nabla\cdot u|^2\mathrm{d}x+\int\mathcal{L}^{s-1}(u\cdot\nabla\sigma)\mathcal{L}^{s-1}\nabla\cdot u\mathrm{d}x\\
  &+\frac{\gamma-1}{2}\int\mathcal{L}^{s-1}(\sigma\nabla\cdot
    u)\mathcal{L}^{s-1}\nabla\cdot u\mathrm{d}x\\
  \leq&\, C\|\mathcal{L}^su\|_{L^2}^2+C \|\mathcal{L}^su\|_{L^2}
        \Big(\|\mathcal{L}^{s-1}u\|_{L^2}\|\nabla\sigma\|_{L^\infty}+
        \|\mathcal{L}^{s-1}\nabla\sigma\|_{L^2}\|u\|_{L^\infty}\Big)\\
  & +C \|\mathcal{L}^su\|_{L^2}
        \Big(\|\mathcal{L}^{s-1}\nabla\cdot u\|_{L^2}\|\sigma\|_{L^\infty}+
    \|\mathcal{L}^{s-1}\sigma\|_{L^2}\|\nabla\cdot u\|_{L^\infty}\Big)\\
 \leq&\, C\|\mathcal{L}^{s} u\|_{L^2}^2 +C\delta ( \|\mathcal{L}^{s-1}u\|_{L^2}^2+\|\mathcal{L}^{s}\sigma\|_{L^2}^2+\|\mathcal{L}^{s-1}\sigma\|_{L^2}^2).\\
  \leq&\, C\|u\|_{H^s}^2+C\delta(\|\mathcal{L}^{s}\sigma\|_{L^2}^2+\|\nabla\sigma\|_{L^2}^2).
\end{align*}
For the second term,
\begin{align*}
\int \mathcal{L}^{s-1}u_t\cdot&\,\nabla\mathcal{L}^{s-1}\sigma\mathrm{d}x=
-\sqrt{\gamma}\|\nabla\mathcal{L}^{s-1} \sigma\|_{L^2}^2-\beta\int \mathcal{L}^{s-1}u\cdot\nabla\mathcal{L}^{s-1}\sigma\mathrm{d}x-\int \mathcal{L}^{s-1+2\alpha}u\cdot\nabla\mathcal{L}^{s-1}\sigma\mathrm{d}x\\
&-\int \mathcal{L}^{s-1}(u\cdot\nabla u)\cdot\nabla\mathcal{L}^{s-1}\sigma\mathrm{d}x-\frac{\gamma-1}{2}\int \mathcal{L}^{s-1}(\sigma\nabla \sigma)\cdot\nabla\mathcal{L}^{s-1}\sigma\mathrm{d}x\\
&-\int \mathcal{L}^{s-1+2\alpha}((\rho(\sigma)-1)u)\cdot\nabla\mathcal{L}^{s-1}\sigma\mathrm{d}x+\int \mathcal{L}^{s-1}(u\mathcal{L}^{2\alpha}(\rho(\sigma)-1))\cdot\nabla\mathcal{L}^{s-1}\sigma\mathrm{d}x\\
\leq &\, -\frac{3\sqrt{\gamma}}{4}\|\mathcal{L}^s \sigma\|_{L^2}^2 + C\|\mathcal{L}^{s-1}u\|_{L^2}^2+ C\|\mathcal{L}^{s-1+2\alpha} u\|_{L^2}^2\\
&+C( \|\mathcal{L}^{s-1}(u\cdot \nabla u)\|_{L^2}^2 +\|\mathcal{L}^{s-1}(\sigma\nabla\sigma)\|_{L^2}^2)\\
& + C\left\|\mathcal{L}^{s-1+2\alpha}((\rho(\sigma)-1)u)-\mathcal{L}^{s-1}(u\mathcal{L}^{2\alpha}(\rho(\sigma)-1))\right\|_{L^2}^2 =:\sum^3_{i=1}K_i.
\end{align*}
Since $s-1+2\alpha<s+\alpha$, $K_1$ can be simply estimated by
\[K_1\leq -\frac{3\sqrt{\gamma}}{4}\|\mathcal{L}^{s} \sigma\|_{L^2}^2+C\|u\|_{H^{s+\alpha}}^2,\]
We estimate $K_2$ by using the fractional Leibniz rule \eqref{Leibnitz},
\begin{align*}
K_2	\leq &\,  C\Big( \|\mathcal{L}^{s-1}u\|_{L^2}^2 \|\nabla u\|_{L^\infty}^2+\|\mathcal{L}^{s-1}\nabla u\|_{L^2}^2 \|u\|_{L^\infty}^2+\|\mathcal{L}^{s-1}\sigma\|_{L^2}^2\|\nabla \sigma\|_{L^\infty}^2+ \|\mathcal{L}^{s-1}\nabla\sigma\|_{L^2}^2\| \sigma\|_{L^\infty}^2\Big)\\
\leq &\,  C\delta (\|u\|_{H^s}^2+ \|\mathcal{L}^{s} \sigma\|_{L^2}^2 +\|\nabla\sigma\|_{L^2}^2 ).
\end{align*}
For $K_3$, we use a similar cancelation as \eqref{cancelation}, and
apply commutator estimates
\begin{align*}
  K_3\leq &\,
    C\| [\mathcal{L}^{s-1+2\alpha}, u](\rho(\sigma)-1) \|_{L^2}^2
    +C\|[\mathcal{L}^{s-1}, u]\mathcal{L}^{2\alpha}(\rho(\sigma)-1)\|_{L^2}^2\\
 \leq &\,
   C\left(\|\mathcal{L}^{s-1+2\alpha}u\|_{L^2}\|\rho(\sigma)-1\|_{L^\infty}
   +\|\nabla u\|_{L^\infty}\|\mathcal{L}^{s-2+2\alpha}(\rho(\sigma)-1)\|_{L^2}\right)^2\\
 &+C\left(\|\mathcal{L}^{s-1}u\|_{L^2}\|\mathcal{L}^{2\alpha}(\rho(\sigma)-1)\|_{L^\infty}
    +\|\nabla u\|_{L^\infty}\|\mathcal{L}^{s-2+2\alpha}(\rho(\sigma)-1)\|_{L^2}\right)^2\\
  \leq&\,C\delta\|u\|_{H^s}^2+C\|\nabla u\|_{L^\infty}^2\|\mathcal{L}^{s-2+2\alpha}\sigma\|_{L^2}^2
   \leq C\delta\|u\|_{H^s}^2.
\end{align*}
Note that as $s>\frac{N}{2}+\max\{1,2\alpha\}$ and $s>2-\alpha$, one
can easily check $0<s-2+2\alpha<s$.
In the last inequality, we have used
$\|\mathcal{L}^{s-2+2\alpha}\sigma\|_{L^2}<\|\sigma\|_{H^s}<\delta$,
and $\|\nabla u\|_{L^\infty}\leq C\|u\|_{H^s}$.

Collect all the estimates of $K_i$, $i=1,2,3$, and choose $\delta$
small so that the $\|\mathcal{L}^s\sigma\|_{L^2}$ term in $K_2$ be absorbed
in to $K_1$, We end up with the desired estimate \eqref{hhs}.
\end{proof}

Adding \eqref{hh1} and \eqref{hhs} and pick $\delta$ small enough, we
get
\begin{equation}\label{hhall}
	\frac{d}{dt}\int
        \left(u\cdot\nabla\sigma+\mathcal{L}^{s-1}u\cdot\nabla \mathcal{L}^{s-1}\sigma\right)\mathrm{d}x+\frac{\sqrt{\gamma}}{2}\|\nabla
        \sigma\|_{H^{s-1}}^2\leq  C \|u\|_{H^s}^2,
\end{equation}
which can be used to control the term $\|\nabla\sigma\|_{H^{s-1}}$ in
\eqref{guh3}. Indeed, multiplying \eqref{hhall} by
$\dfrac{4C\delta}{\sqrt{\gamma}}$, adding it to \eqref{guh3}, and
choosing $\delta$ small, we obtain
\begin{equation}\label{Yt}
  \frac{d}{dt}Y(t)
+\frac{\nu}{2}\|u\|_{H^{s+\alpha}}^2+C\delta\|\nabla
\sigma\|_{H^{s-1}}^2\leq 0,
\end{equation}
where
\[Y(t):=\|\sigma(t,\cdot)\|_{H^s}^2+\|u(t,\cdot)\|_{H^s}^2+\dfrac{4C\delta}{\sqrt{\gamma}}\int(u\cdot \nabla\sigma+\mathcal{L}^{s-1}u\cdot \nabla \mathcal{L}^{s-1}\sigma)\mathrm{d}x.\]
Note that if $\delta$ is small enough, $Y(t)\geq0$ and is equivalent
to  $\|\sigma(t,\cdot)\|_{H^s}^2+\|u(t,\cdot)\|_{H^s}^2$. Indeed,
there exists a constant $C_0>1$ such that
\begin{equation}\label{unh3}
  C_0^{-1}\Big(\|\sigma(t,\cdot)\|_{H^s}^2+\|u(t,\cdot)\|_{H^s}^2\Big)\leq
  Y(t)\leq C_0\Big(\|\sigma(t,\cdot)\|_{H^s}^2+\|u(t,\cdot)\|_{H^s}^2\Big).
\end{equation}

Integrating \eqref{Yt} directly in time, we have
\begin{equation}\label{Yest}
  Y(t)+\int_0^t \left(\frac{\nu}{2}\|u(\tau,\cdot\|_{H^{s+\alpha}}^2
    +C\delta\|\nabla\sigma(\tau,\cdot)\|_{H^{s-1}}^2\right)
  \mathrm{d}\tau\leq Y(0).
\end{equation}
With the help  of (\ref{unh3}), we obtain that for any $t\in[0,T]$,
\[\|\sigma(t,\cdot)\|_{H^s}^2+\|u(t,\cdot)\|_{H^s}^2\leq C_0Y(t)\leq
  C_0Y(0)\leq C_0^2\Big(\|\sigma_0\|_{H^s}^2+\|u_0\|_{H^s}^2\Big).\]
This ends the proof of Lemma \ref{lem:small}.

\begin{rem}\label{rem:longtime}
  Another outcome of \eqref{Yest} is that
  \[\int_0^\infty
    \Big(\|u(t,\cdot)\|_{H^{s+\alpha}}^2+\|\nabla\sigma(t,\cdot)\|_{H^{s-1}}^2\Big)
    \mathrm{d}t\leq Y(0)<+\infty.\]
  Therefore, $\|u\|_{H^s}$ and $\|\nabla\sigma\|_{H^{s-1}}$ decay
  to zero as $t\to\infty$.
  In particular, $\|u\|_{L^\infty}\to0$ implies the flocking
  phenomenon (to be more precise, velocity alignment).
  Note that our estimates does not imply any
  decay for $\|\sigma\|_{L^2}$. The convergence rate of $\|u\|_{H^s}$
  and $\|\nabla\sigma\|_{H^{s-1}}$ will also depend on the long time
  behavior of $\|\sigma\|_{L^2}$.
  More discussions and results for the case
  $\Omega=\mathbb{T}^N$ will be offered in the next section.
\end{rem}

\section{Long time behavior}

In this part, we study the large-time behavior of global classical
solutions $(\rho, u)$ to the system \eqref{eqn1o}-\eqref{eqn3} for the
case $\Omega=\mathbb{T}^N$. Without lose of generality we assume
$|\mathbb{T}^N|=1$.

Firstly, in order to obtain the dissipation estimate of $\rho$, we
denote the function
\begin{eqnarray*}
h(\rho)=\int^\rho_1 \dfrac{z^\gamma-1}{z^2}\mathrm{d}z=\left\{\begin{array}{ll} \rho \ln\rho-\rho+1, &\gamma=1,\\[3mm] \dfrac{1}{\gamma-1}\rho^\gamma-\dfrac{\gamma}{\gamma-1}\rho+1,&\gamma>1,
\end{array}\right.
\end{eqnarray*}
so that $h(1)=h'(1)=0$ and $p'(\rho)=h''(\rho)\rho$.
One can easily check that when $\rho$ is close to 1,
$h(\rho)\sim(\rho-1)^2$. Indeed, we state the following lemma.

\begin{lem}[\cite{fang}]
  Let $\delta>0$ be a small parameter, and $|\rho-1|<\delta$.  Then,
  there exists constants $c_1, c_2$, depending only on $\delta$, such that
  \begin{equation}\label{equiv}
    c_1(\rho-1)^2\leq h(\rho)\leq c_2(\rho-1)^2.
  \end{equation}
\end{lem}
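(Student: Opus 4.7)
The plan is to apply Taylor's theorem to $h$ about the base point $\rho=1$, exploiting two facts that follow for free from the construction of $h$, together with a short computation that $h''(1)>0$.

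First I would collect the boundary values. From the defining integral $h(\rho)=\int_1^\rho \frac{z^\gamma-1}{z^2}\mathrm{d}z$ we have $h(1)=0$ immediately, and the fundamental theorem of calculus gives $h'(\rho)=\frac{\rho^\gamma-1}{\rho^2}$, so $h'(1)=0$. Differentiating once more,
\[
h''(\rho)=\frac{\gamma\rho^{\gamma-1}}{\rho^2}-\frac{2(\rho^\gamma-1)}{\rho^3}=\rho^{-3}\bigl(\gamma\rho^\gamma-2(\rho^\gamma-1)\bigr),
\]
from which $h''(1)=\gamma>0$. The function $h''$ is continuous on $(0,\infty)$.

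Next I would apply Taylor's theorem with the Lagrange form of the remainder at $\rho_0=1$: choosing $\delta$ small enough (in particular $\delta<1/2$ so that $[1-\delta,1+\delta]\subset(0,\infty)$), for every $\rho$ with $|\rho-1|<\delta$ there exists $\xi$ between $1$ and $\rho$ with
\[
h(\rho)=\tfrac12\, h''(\xi)\,(\rho-1)^2.
\]
Continuity of $h''$ together with $h''(1)=\gamma>0$ yields positive constants $m_\delta\le M_\delta$, depending only on $\gamma$ and $\delta$, such that $m_\delta\le h''(\eta)\le M_\delta$ for all $\eta\in[1-\delta,1+\delta]$. Setting $c_1=m_\delta/2$ and $c_2=M_\delta/2$ delivers the two-sided bound.

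Since the statement is a purely local analytic fact about a smooth function whose zeroth- and first-order Taylor coefficients vanish and whose second-order coefficient is strictly positive, there is no genuine obstacle. The only mild care required is to keep $\delta$ small enough that the interval $[1-\delta,1+\delta]$ stays bounded away from $\rho=0$, where $h''$ becomes singular; otherwise the proof is a one-line Taylor argument.
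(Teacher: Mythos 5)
Your proof is correct. The paper itself does not prove this lemma --- it simply cites \cite{fang} and remarks that ``one can easily check'' the equivalence --- so your second-order Taylor expansion about $\rho=1$ is exactly the standard argument being alluded to: $h(1)=h'(1)=0$ by construction, $h''(1)=\gamma>0$, and continuity of $h''$ on a compact interval $[1-\delta,1+\delta]$ bounded away from $0$ gives the two-sided bound with $c_1=\min h''/2$ and $c_2=\max h''/2$. One small remark: the integral formula $\int_1^\rho\frac{z^\gamma-1}{z^2}\,\mathrm{d}z$ as printed in the paper does not actually evaluate to the displayed closed forms (those correspond to $h''(\rho)=p'(\rho)/\rho=\gamma\rho^{\gamma-2}$, i.e.\ the integrand should carry a factor of $\rho$ in front of the integral); your $h''$ computed from the literal integral, $\rho^{-3}\bigl(\gamma\rho^\gamma-2(\rho^\gamma-1)\bigr)$, therefore differs from the ``intended'' $h''$ away from $\rho=1$. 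This is immaterial for the lemma, since both candidate functions satisfy $h(1)=h'(1)=0$ and $h''(1)=\gamma>0$ with $h''$ continuous and positive near $\rho=1$, so your argument goes through verbatim for either. The only other nitpick is that your constants depend on $\gamma$ as well as $\delta$, but that is consistent with the paper's convention that generic constants may depend on $\gamma$.
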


Now, we are ready to prove Theorem \ref{large}.

\noindent\emph{Step 1: Dissipation of the physical energy.}
 Let $(\rho,u)$ be a global classical solutions to
 \eqref{eqn1o}-\eqref{eqn2o}, we will establish the decay estimate of
 the physical energy:
\begin{equation}\label{es}
\frac{d}{dt}\int \Big(\frac{1}{2}\rho |u|^2+h(\rho)\Big)\mathrm{d}x+\beta\int\rho|u|^2\mathrm{d}x+\frac{1}{2}\iint\phi(x-y)(u(x)-u(y))^2\rho(x)\rho(y)\mathrm{d}x\mathrm{d}y\leq 0.
\end{equation}
Using the symmetry of $\phi(\cdot)$ and equation \eqref{eqn1o}-\eqref{eqn2o}, it is obtained that
\begin{align}\label{ru}
&\frac{d}{dt}\int\rho |u|^2\mathrm{d}x=\int\rho_t|u|^2+2\int\rho u \cdot u_t\mathrm{d}x \nonumber\\
&=-\int{\rm div}(\rho u)|u|^2\mathrm{d}x-2\int\rho u\Big(u\cdot\nabla u +\frac{\nabla p(\rho)}{\rho}+\beta u+\int\phi(x-y)(u(x)-u(y))\rho(y)\mathrm{d}y\Big)\nonumber\\
&=-2\int u\cdot\nabla p(\rho)\mathrm{d}x-2\beta\int\rho|u|^2\mathrm{d}x-2\iint\rho(x)u(x)\phi(x-y)(u(x)-u(y))\rho(y)\mathrm{d}x\mathrm{d}y\nonumber\\
&=-2\int u\cdot\nabla p(\rho)\mathrm{d}x-2\beta\int\rho|u|^2\mathrm{d}x-\iint\phi(x-y)(u(x)-u(y))^2\rho(x)\rho(y)\mathrm{d}x\mathrm{d}y.
\end{align}
Furthermore, applying the definition of $h(\rho)$ , we can compute that
\begin{equation}\label{hr}
\frac{d}{dt}\int h(\rho)\mathrm{d}x
=\int h^\prime(\rho)\rho_t\mathrm{d}x=-\int h^\prime(\rho){\rm div}(\rho u)\mathrm{d}x=\int h^{\prime\prime}(\rho)\rho\nabla\rho\cdot u\mathrm{d}x=\int u\cdot\nabla p(\rho)\mathrm{d}x.
\end{equation}
Combining \eqref{ru}-\eqref{hr}, we obtain \eqref{es}.

\noindent\emph{Step 2: Dissipation of $\|\rho-1\|_{L^2}$.}
We first define a stream function $\psi$ which solves the Poisson
equation
\[
  -\Delta\psi=\rho-1,\qquad \int_{\mathbb{T}^N}\psi\mathrm{d}x=0.
\]
It is uniquely defined on $\mathbb{T}^N$ since $\rho-1$ has zero
mean (condition \eqref{rhobar}).

Similar as Lemma \ref{lem:cross}, we introduce a small cross-term to
the physical energy, and define 
\[
  V_{\varepsilon}=\int \Big(\frac{1}{2}\rho|u|^2+h(\rho)+\varepsilon\rho u\cdot \nabla\psi\Big)\mathrm{d}x.
\]
Note that the cross term can be controled by
\begin{align}\label{5.10}
\left|\int\rho u \cdot\nabla\psi\mathrm{d}x\right|&\leq \|\rho\|_{L^\infty}^{\frac{1}{2}}\|\sqrt{\rho}u\|_{L^2}\|\nabla\psi\|_{L^2}\leq \frac{1}{2}\int\rho|u|^2\mathrm{d}x+\frac{C\|\rho\|_{L^\infty}}{2}\int(\rho-1)^2\mathrm{d}x,
\end{align}
where Poincar\'e inequality is used so that $\|\nabla\psi\|_{L^2}\leq
C\|\nabla^{\otimes2}\psi\|_{L^2}\leq C\|\rho-1\|_{L^2}$. Then, 
it follows from \eqref{equiv}, \eqref{5.10} and for a sufficient small $\varepsilon$
that there exists a constant $C_1>0$ which depends on $\varepsilon$ and $\|\rho\|_{L^{\infty}}$, such that
\begin{equation}\label{vv}
\frac{1}{C_1}\Big(\int \rho|u|^2\mathrm{d}x+\int|\rho-1|^2\mathrm{d}x\Big)\leq V_\varepsilon\leq C_1\Big(\int \rho|u|^2\mathrm{d}x+\int(\rho-1)^2\mathrm{d}x\Big),
\end{equation}
namely, $V_\varepsilon$ is equivalent to the physical energy.

Applying \eqref{es}, It is easy to check that
\begin{align}\label{vw}
\frac{d}{dt}V_\varepsilon+W_\varepsilon\leq0,
\end{align}
where
\begin{align*}
W_{\varepsilon}&=\beta\int\rho|u|^2\mathrm{d}x+\frac{1}{2}\iint\phi(x-y)(u(x)-u(y))^2\rho(x)\rho(y)\mathrm{d}x\mathrm{d}y
+\varepsilon\int\nabla\psi\cdot\nabla p(\rho)\mathrm{d}x\\
&+\varepsilon\int\nabla\psi\cdot\nabla(\rho u\otimes u)\mathrm{d}x-\varepsilon\int\rho u \cdot \nabla\psi_t \mathrm{d}x\\
&+\varepsilon\iint\nabla\psi(x)\cdot\phi(x-y)(u(x)-u(y))\rho(x)\rho(y)\mathrm{d}x\mathrm{d}y=\sum_{i=1}^{6}L_i.
\end{align*}

We investigate further for the terms $L_1$ to $L_6$ in $W_\varepsilon$.
Notice that $L_1$ and $L_2$ are positive. The positivity of $L_3$ can be obtained in the following:
\[
  L_3=-\varepsilon\int\Delta\psi \big(p(\rho)-p(1)\big)\mathrm{d}x=\varepsilon\int(\rho-1)(p(\rho)-p(1))\mathrm{d}x\geq \varepsilon\int (\rho-1)^2\mathrm{d}x,
\]
where we have used the fact that $\frac{\rho^\gamma-1}{\rho-1}\geq1$ for
any $\rho\geq0$ and $\gamma\geq1$.
This term produces dissipation for $\|\rho-1\|_{L^2}$.

$L_4$ can be controlled by the dissipation as follows
\begin{align*}
|L_4|=&~\varepsilon\left|\int\nabla\otimes\nabla\psi : (\rho u\otimes
       u)\mathrm{d}x\right|\leq\varepsilon\|\rho-1\|_{L^2}\|\rho u\otimes
       u\|_{L^2}\\ \leq
  &~\frac{\varepsilon}{4}\|\rho-1\|_{L^2}^2+\varepsilon\|\rho u\otimes u\|_{L^2}^2\leq\frac{\varepsilon}{4}\|\rho-1\|_{L^2}^2+\varepsilon\|\rho\|_{L^{\infty}}\|u\|^2_{L^{\infty}}\int\rho |u|^2\mathrm{d}x.
\end{align*}

The term $L_5$ can be estimated in the following
\[
|L_5|=\Big|\varepsilon\int\rho u \cdot \nabla(-\Delta)^{-1}\nabla\cdot(\rho u) \mathrm{d}x\Big|\leq \varepsilon \|\rho u\|_{L^2}\|\nabla(-\Delta)^{-1}\nabla\cdot(\rho u)\|_{L^2}\leq \varepsilon \|\rho u\|_{L^2}^2\leq \varepsilon\|\rho\|_{L^\infty}\int\rho |u|^2\mathrm{d}x.
\]

Finally, for $L_6$ we have
\begin{align*}
|L_6|&=\frac{\varepsilon}{2}\left|\iint\big(\nabla\psi(x)-\nabla\psi(y)\big)\phi(x-y)\cdot(u(x)-u(y))\rho(x)\rho(y)\mathrm{d}x\mathrm{d}y\right|\\
     &\leq\theta\varepsilon\iint\frac{|\nabla\psi(x)-\nabla\psi(y)|^2}{|x-y|^{n+2\alpha}}\mathrm{d}x\mathrm{d}y+\frac{C}{\theta}\varepsilon\|\rho\|^2_{L^\infty}\iint\phi(x-y)\rho(x)\rho(y)(u(x)-u(y))^2\mathrm{d}x\mathrm{d}y,
\end{align*}
where the first part
\[\iint\frac{|\nabla\psi(x)-\nabla\psi(y)|^2}{|x-y|^{n+2\alpha}}\mathrm{d}x\mathrm{d}y=
  \|\nabla\psi\|_{\dot{H}^\alpha}^2\leq C\|\psi\|_{\dot{H}^2}^2\leq
  C\|\rho-1\|_{L^2}^2,\quad
\forall~\alpha\in(0,1).\]
Then, choosing $\theta$ appropriately, we obtain that
\[
  |L_6|\leq \frac{\varepsilon}{4}\|\rho-1\|_{L^2}^2
+C\varepsilon\|\rho\|^2_{L^\infty}
\iint\phi(x-y)\rho(x)\rho(y)(u(x)-u(y))^2\mathrm{d}x\mathrm{d}y.
\]

Collecting all the estimates of $L_i$, and picking $\varepsilon$ small
enough (which depends on $\|\rho\|_{L^\infty}$ and
$\|u\|_{L^\infty}$), we get a lower bound on $W_\varepsilon$
\begin{align}\label{Wlow}
W_\varepsilon\geq&~
\Big(\beta-C\varepsilon\|\rho\|_{L^{\infty}}\|u\|^2_{L^{\infty}}-\varepsilon\|\rho\|_{L^{\infty}}\Big)\int\rho |u|^2\mathrm{d}x\nonumber\\
&~+\Big(\frac{1}{2}-C\varepsilon\|\rho\|^2_{L^\infty}\Big)\iint\phi(x-y)\rho(x)\rho(y)(u(x)-u(y))^2\mathrm{d}x\mathrm{d}y
+\frac{\varepsilon}{2}\int (\rho-1)^2\mathrm{d}x.\nonumber\\
  \geq&~\frac{\beta}{2}\int\rho |u|^2\mathrm{d}x
        +\frac{1}{4}\iint\phi(x-y)\rho(x)\rho(y)(u(x)-u(y))^2\mathrm{d}x\mathrm{d}y
        +\frac{\varepsilon}{2}\int (\rho-1)^2\mathrm{d}x.
\end{align}

When there is no damping, namely $\beta=0$, we can use the second term
in \eqref{Wlow} to produce dissipation for the kinetic energy.

\begin{lem}
  There exists a constant $\tilde{C}>0$, depending on $\alpha$, such that
  \begin{equation}\label{TNdis}
\frac{1}{4}\iint\phi(x-y)\rho(x)\rho(y)(u(x)-u(y))^2\mathrm{d}x\mathrm{d}y\geq \tilde{C}\int\rho|u|^2\mathrm{d}x.
\end{equation}
\end{lem}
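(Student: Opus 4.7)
My plan is to combine two observations: a pointwise lower bound on $\phi$ that is available because the torus is compact, and the momentum conservation that holds in the regime $\beta=0$ of this lemma. Note that no lower bound on $\rho$ is needed, which is important since the standing hypothesis only provides $\rho\in L^\infty$.

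First I would establish that there exists $\phi_0>0$ depending only on $\alpha$ (and the fixed dimension $N$) such that $\phi(x-y)\geq\phi_0$ for all $x,y\in\mathbb{T}^N$. Since $|\mathbb{T}^N|=1$, the torus has a finite diameter $D=D(N)$, so $|x-y|\leq D$, and \eqref{phi} yields
\[
\phi(x-y)=\frac{c_\alpha}{|x-y|^{N+2\alpha}}\geq \frac{c_\alpha}{D^{N+2\alpha}}=:\phi_0.
\]
If one interprets $\phi$ on the torus as the periodized kernel $\tilde\phi(z)=\sum_{k\in\mathbb{Z}^N}\phi(z+k)$, then the $k=0$ contribution already yields the same lower bound.

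Second, I would expand the squared difference and invoke the conservation laws. By direct computation,
\[
\iint\rho(x)\rho(y)|u(x)-u(y)|^2\,\mathrm{d}x\,\mathrm{d}y
= 2\Big(\int\rho\,\mathrm{d}x\Big)\Big(\int\rho|u|^2\,\mathrm{d}x\Big)
-2\Big|\int\rho u\,\mathrm{d}x\Big|^2.
\]
Conservation of mass together with \eqref{rhobar} gives $\int\rho\,\mathrm{d}x\equiv 1$, while the undamped ($\beta=0$) system preserves total momentum, so \eqref{zeromom} propagates to $\int\rho u\,\mathrm{d}x\equiv 0$ for all times. Hence the right-hand side collapses to $2\int\rho|u|^2\,\mathrm{d}x$.

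Combining the two ingredients,
\[
\tfrac14\iint\phi(x-y)\rho(x)\rho(y)|u(x)-u(y)|^2\,\mathrm{d}x\,\mathrm{d}y
\geq \tfrac{\phi_0}{4}\cdot 2\int\rho|u|^2\,\mathrm{d}x
=\tfrac{\phi_0}{2}\int\rho|u|^2\,\mathrm{d}x,
\]
so one may take $\tilde C=\phi_0/2$, which depends only on $\alpha$ as claimed. I do not anticipate any serious obstacle: the whole point of passing to the torus is precisely that the singular kernel becomes bounded below away from zero on the compact domain, while momentum conservation in the undamped regime cancels the cross term $|\int\rho u|^2$ in the expansion. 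It is worth emphasizing that the argument bypasses any need for $\rho_{\min}>0$; the more classical approach that factors out $\inf(\rho(x)\rho(y))$ would fail under the assumptions of Theorem~\ref{large}.
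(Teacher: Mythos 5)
Your proposal is correct and follows essentially the same route as the paper: a uniform positive lower bound on the (periodized) kernel over the compact torus, followed by expanding $\iint\rho(x)\rho(y)|u(x)-u(y)|^2\,\mathrm{d}x\,\mathrm{d}y$ and cancelling the cross term via conservation of mass with \eqref{rhobar} and of momentum (for $\beta=0$) with \eqref{zeromom}. The only cosmetic difference is that you bound the kernel by $c_\alpha/D^{N+2\alpha}$ while the paper uses $\min_{\|x\|_\infty<1/2}\phi(x)$ for the periodized kernel $\phi_P$; these are the same idea.
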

\begin{proof}
  Write the integral on the left hand side of \eqref{TNdis} in
  $\mathbb{T}^{2N}$
  \[\frac{1}{4}\iint_{\mathbb{T}^{2N}}\phi(x-y)\rho(x)\rho(y)(u(x)-u(y))^2\mathrm{d}x\mathrm{d}y
  =\frac{1}{4}\iint_{\mathbb{T}^{2N}}\phi_P(x-y)\rho(x)\rho(y)(u(x)-u(y))^2\mathrm{d}x\mathrm{d}y\]
with periodic kernel $\phi_P$
\[\phi_P(x)=\sum_{k\in\mathbb{Z}^{N}}\phi(x+k),\quad\forall~x\in\mathbb{T}^N.\]
Clearly, $\phi_P$ has a positive lower bound
(e.g. $\phi_P(x)>\displaystyle\min_{\|x\|_\infty<\frac12}\phi(x)>0$). Let
us denote the bound by $\phi_m$. It only depends on $\alpha$ (and
$\mathbb{T}^N$ as well, but we have set $|\mathbb{T}^N|=1$ here). Then, we have
\begin{align*}
  &\frac{1}{4}\iint_{\mathbb{T}^{2N}}\phi_P(x-y)\rho(x)\rho(y)(u(x)-u(y))^2\mathrm{d}x\mathrm{d}y
\geq
    \frac{\phi_m}{4}\iint_{\mathbb{T}^{2N}}\rho(x)\rho(y)(u(x)-u(y))^2\mathrm{d}x\mathrm{d}y\\
  &\quad=\frac{\phi_m}{2}\left(\int_{\mathbb{T}^N}\rho(x)\mathrm{d}x
    \int_{\mathbb{T}^N}\rho(x)|u(x)|^2\mathrm{d}x-\left|
    \int_{\mathbb{T}^N}\rho(x)u(x)\mathrm{d}x\right|^2\right)=\frac{\phi_m}{2} \int_{\mathbb{T}^N}\rho|u|^2\mathrm{d}x,
\end{align*}
where we have used \eqref{rhobar} and \eqref{zeromom}. This implies \eqref{TNdis} with $\tilde{C}=\frac{\phi_m}{2}$.
\end{proof}

Thus, we deduce that
\begin{equation}\label{Wdelta}
W_\varepsilon\geq 
\Big(\frac\beta2+\tilde{C}\Big)\int\rho |u|^2\mathrm{d}x+\frac{\varepsilon}{2}\int (\rho-1)^2\mathrm{d}x.
\end{equation}

\noindent\emph{Step 3: The decay estimate.}
Combining the inequalities in \eqref{vv}, \eqref{vw} and
\eqref{Wdelta}, we deduce that there exists a constant $\mu>0$, which
depend on $\gamma$ and $\varepsilon$, such that
\[
  \frac{d}{dt}V_\varepsilon+\mu V_\varepsilon\leq0.
\]
Applying Gronwall's inequality and \eqref{vv}, we get the desired
exponential decay
\begin{align*}
\int\rho|u|^2\mathrm{d}x+\int(\rho -1)^2\mathrm{d}x&\leq C_1V_\varepsilon(t)\leq C_1V_\varepsilon(0)e^{-\mu t}\nonumber\\
&\leq C_1^2\left(\int\rho_0|u|_0^2\mathrm{d}x+\int(\rho_0-1)^2\mathrm{d}x \right) e^{-\mu t}.
\end{align*}
This finishes the proof of \eqref{expDecay},

\noindent\emph{Step 4: Decay on higher order norms.}
To obtain exponential decay for $\|\sigma\|_{H^s}$ and $\|u\|_{H^s}$,
we start with the control on $\|\sigma\|_{L^2}$.
From the relation \eqref{sigma}, we have
\[\|\sigma\|_{L^2}^2=\int(\sigma(\rho)-\sigma(1))^2\mathrm{d}x\leq
  \|\sigma'\|_{L^\infty(\text{supp}(\rho))}^2\int(\rho-1)^2\mathrm{d}x\leq
  C\|\rho-1\|_{L^2}^2,\]
where $\sigma'(\rho)=\sqrt{\gamma}\rho^{\frac{\gamma-3}{2}}$ is
bounded provided that $\rho$ is bounded and $\rho_{\min}>0$. This is
guaranteed by Theorem \ref{global} with a small enough
$\delta_0$. Therefore, we have
\[\|\sigma(t,\cdot)\|_{L^2}^{2}\leq Ce^{-\mu t}.\]

Next, recall the estimate \eqref{Yt} and add
$C\delta\|\sigma\|_{L^2}^2$ on both sides of the inequality
\[
  \frac{d}{dt}Y(t)+\frac{\nu}{2}\|u\|_{H^{s+\alpha}}^2+C\delta\|\sigma\|_{H^s}^2\leq C\delta\|\sigma\|_{L^2}^2,
\]
Apply \eqref{unh3} and get
\[\frac{d}{dt}Y(t)+\mu_1Y(t)\leq C\delta\|\sigma\|_{L^2}^2\leq
  C\delta e^{-\mu t},
  \]
where we can choose $\mu_1=C_0^{-1}\min\{\frac{\nu}{2}, C\delta\}$.
This implies
\[Y(t)\leq \big(Y(0)+C\delta\big)e^{-\min\{\mu_1,\mu\} t}.\]
Using \eqref{unh3} again, we conclude
\[\|\sigma(t,\cdot)\|_{H^s}^2+\|u(t,\cdot)\|_{H^s}^2\leq C_0Y(t)\leq C_0 \big(Y(0)+C\delta\big)e^{-\min\{\mu_1,\mu\} t}\leq  C\delta_0e^{-\min\{\mu_1,\mu\} t}.\]
This ends the proof of \eqref{expDecayh}.



\begin{thebibliography}{99}
\bibitem{BP}
  {\sc  N. Boers and P. Pickl,}
  {\sl On mean field limits for dynamical systems,}
   J. Stat. Phys., \textbf{164} (2016), no. 1, 1--16.

\bibitem{CCR}
  {\sc J. A. Canizo, J. A. Carrillo and J. Rosado,,}
  {\sl A well-posedness theory in measures for some kinetic models of collective motion,}
  Math. Mod. Meth. Appl. Sci., \textbf{21} (2011), 515--539.

\bibitem{CCP}
  {\sc J. A. Carrillo, Y.-P. Choi and S. P. Perez},
  {\sl A review on attractive–repulsive hydrodynamics for consensus in collective behavior}, In Active Particles, Volume 1, pp. 259--298. Birkhäuser, Cham, 2017.


\bibitem{CCTT}
  {\sc J. A. Carrillo, Y.-P. Choi, E. Tadmor and C. Tan,}
  {\sl Critical thresholds in 1D Euler equations with nonlocal forces,}
  Math. Mod. Meth. Appl. Sci., \textbf{26} (2016), no. 1, 185--206.


\bibitem{CFTV}
  {\sc  J. A. Carrillo, M. Fornasier, G. Toscani and F. Vecil},
  {\sl Particle, kinetic, and hydrodynamic models of swarming,}
  In Mathematical modeling of collective behavior in socio-economic and life sciences, pp. 297--336. Birkhäuser, Boston, 2010.

\bibitem{CCGW}
  {\sc  J. Che, L. Chen, S. G\"ottlich and J. Wang,}
  {\sl Existence of a classical solution to complex material flow problems,}
  Math. Meth. Appl. Sci., \textbf{39} (2016), no. 14, 4069--4081.

\bibitem{CGY},
  {\sc  L. Chen, S. G\"ottlich and Q. Yin,}
  {\sl Mean field limit and propagation of chaos for a pedestrian flow model,}
   J. Stat. Phys., \textbf{166} (2017), no. 14, 211--229.

\bibitem{C}
  {\sc Y. P. Choi,}
  {\sl The global Cauchy problem for compressible Euler equations with a nonlocal dissipation,}
  Math. Mod. Meth. Appl. Sci., \textbf{29} (2019), no. 1, 185--207.

\bibitem{CDS}
  {\sc P. Constantin, T. D. Drivas and R. Shvydkoy,}
  {\sl Entropy Hierarchies for equations of compressible fluids and self-organized dynamics,}
  SIAM J. Math. Anal., \textbf{52} (2020), no. 3, 3073--3092.


\bibitem{CS}
  {\sc F. Cucker and S. Smale,}
  {\sl Emergent behavior in flocks,}
  IEEE Trans. Automat. Control, \textbf{52} (2007), 852--862.

\bibitem{DLR2}
  {\sc  P. Degond, J. G. Liu and C. Ringhofer,}
  {\sl Evolution of the distribution of wealth in an economic environment driven by local Nash equilibria,}
   J. Stat. Phys., \textbf{154} (2014), no. 3, 751--780.

\bibitem{DKRT}
  {\sc T. Do, A. Kiselev, L. Ryzhik and C. Tan,}
  {\sl Global regularity for the fractional Euler alignment system,}
  Arch. Ration. Mech. Anal., \textbf{228} (2018), no. 1, 1--37.

\bibitem{EGKT}
  {\sc R. Etikyala, S. Goettlich, A. Klar and S. Tiwari,}
  {\sl Particle methods for pedestrian flow models: from microscopic to nonlocal continuum models,}
  Math. Mod. Meth. Appl. Sci., \textbf{24} (2014), no. 12, 2503--2523.
\bibitem{fang}D. Fang, R. Zi, and T. Zhang, {\em Decay estimates for isentropic compressible Navier-Stokes equations in bounded domain}, J. Math. Anal. Appl., 386: 939-947, 2012.
\bibitem{FK}
  {\sc A. Figalli and M-J. Kang,}
  {\sl A rigorous derivation from the kinetic Cucker–Smale model to the pressureless Euler system with nonlocal alignment,}
  Anal. PDE, \textbf{12} (2018), no. 3, 843--866.



\bibitem{HL}
  {\sc S.-Y. Ha and J.-G. Liu},
  {\sl A simple proof of the Cucker-Smale flocking dynamics and
    mean-field limit},
  Commun. Math. Sci., \textbf{7} (2009), no. 2, 297--325.


\bibitem{HJ1}
  {\sc M. Hauray and P. E. Jabin,}
  {\sl $N$-particles approximation of the Vlasov equations with singular potential,}
  Arch. Ration. Mech. Anal., \textbf{183} (2007), no. 3, 489--524.


\bibitem{KMT}
  {\sc T. Karper, A. Mellet and K. Trivisa,}
  {\sl Hydrodynamic limit of the kinetic Cucker-Smale flocking model,}
  Math. Mod. Meth. Appl. Sci., \textbf{25} (2015), 131--163.

\bibitem{KT}
  {\sc A. Kiselev and C. Tan,}
  {\sl Global regularity for 1D Eulerian dynamics with singular interaction forces,}
  SIAM J. Math. Anal., \textbf{50} (2018), no. 6, 6208--6229.




\bibitem{LP}
  {\sc D. Lazarovici and P. Pickl,}
  {\sl A mean field limit for the Vlasov-Poisson system,}
  Arch. Ration. Mech. Anal., \textbf{225} (2017), no. 3, 1201--1231.

\bibitem{LT}
  {\sc Y. Lee and C. Tan,}
  {\sl A sharp critical threshold for a traffic flow model with look-ahead dynamics,}
  arXiv preprint arXiv:1905.05090, 2019.


\bibitem{DL}
  {\sc D. Li,}
  {\sl On Kato-Ponce and fractional Leibniz,}
  Rev. Mat. Iberoam., \textbf{35} (2019), no. 1, 23--100.

\bibitem{MTX}
  {\sc Q. Miao, C. Tan and L. Xue,}
  {\sl Global regularity for a 1D Euler-alignment system with misalignment,}
  arXiv preprint arXiv:2004.03652, 2020.


\bibitem{Shv}
  {\sc R. Shvydkoy},
  {\sl Global existence and stability of nearly aligned flocks,}
  J. Dyna. Diff. Equa., \textbf{31} (2019), no. 4, 2165--2175.

\bibitem{ST1}
  {\sc R. Shvydkoy and E. Tadmor,}
  {\sl Eulerian dynamics with a commutator forcing,}
  Trans. Math. Appl. \textbf{1} (2017), no. 1, tnx001.

\bibitem{ST3}
  {\sc R. Shvydkoy and E. Tadmor,}
  {\sl Eulerian dynamics with a commutator forcing III. Fractional diffusion of order $0<\alpha<1$,}
  Physica D, \textbf{376-377} (2018), 131--137.

\bibitem{STW}
  {\sc T.~C. Sideris, B. Thomases and D. Wang,}
  {\sl Long time behavior of solutions to the 3D compressible Euler equations with damping,}
 Commun. Part. Diff. Eq,, \textbf{28} (2003), no. 3--4, 795--816.

\bibitem{TT14}
  {\sc E. Tadmor and C. Tan,}
  {\sl Critical thresholds in flocking hydrodynamics with non-local alignment,}
  Philos. Trans. Roy. Soc. A, \textbf{372} (2014), 20130401.

\bibitem{T19}
  {\sc C. Tan,}
  {\sl Singularity formation for a fluid mechanics model with nonlocal velocity,}
  Commun. Math. Sci., \textbf{17} (2019), no. 7, 1779--1794.

\bibitem{T20}
  {\sc C. Tan,}
  {\sl On the Euler-alignment system with weakly singular communication weights,}
  Nonlinearity, \textbf{33} (2020), no. 4, 1907--1924.

\bibitem{TW}
  {\sc Z. Tan and Y. Wang,}
  {\sl Global solution and large-time behavior of the 3D compressible Euler equations with damping,}
  J. Diff. Equa., \textbf{254} (2013), no. 4, 1686--1704.

\bibitem{TC}
  {\sc L. Tong and L. Chen,}
  {\sl Decay estimate to a compressible Euler system with non-local velocity alignment,}
  Commun. Math. Sci., (2020), to appear.

\bibitem{TCGW}
  {\sc L. Tong, L. Chen, S. G\"ottlich, and S. Wang,}
  {\sl Global classical solution to compressible Euler system with velocity alignment,}
  arXiv preprint arXiv:1912.01374, 2019.

\bibitem{WY}
  {\sc W. Wang and T. Yang,}
  {\sl The pointwise estimates of solutions for Euler equations with damping in multi-dimensions,}
  J. Diff. Equa., \textbf{173} (2001), no. 2, 410--450.

\end{thebibliography}
\end{document}